  \newcommand{\bproof}{\begin{proof}}
  	\newcommand{\eproof}{\end{proof}}
  \newcommand{\ve}{{\varepsilon}}
  \newcommand{\ben}{\begin{equation}}
  \newcommand{\een}{\end{equation}}
  \newcommand{\benn}{\begin{equation*}}
  \newcommand{\eenn}{\end{equation*}}
  \newcommand{\R}{\mathbf{R}}
  \newcommand{\divv}{\operatorname{div}}
  \newcommand{\varbf}{\mathbf{{\varphi}}}
  \newcommand{\ub}{\mathbf{u}}
  \newcommand{\pb}{\mathbf{p}}
  \newcommand{\Eb}{\textbf{E}}
  \newcommand{\psib}{\mathbf{{\psi}}}
  \newcommand{\dt}{{\frac{d}{dt}}}
  \newcommand{\tr}{\operatorname{tr}}
  \newcommand{\VV}{\theta}
  \newcommand{\une}{u_n}
  \newcommand{\udi}{u_d}
  \newcommand{\pdi}{p_d}
  \newcommand{\pne}{p_n}
  \newcommand{\phdi}{\varphi_d}
  \newcommand{\phne}{\varphi_n}
  \newcommand{\psdi}{\psi_d}
  \newcommand{\psne}{\psi_n}
  \newtheorem{thmx}{Assumption}
  \newtheorem{thmy}{Assumption}
  \newcommand{\ds}{\, ds}
  \newcommand{\dx}{\, dx}
  \newcommand{\dn}{\partial_n}
  \newcommand{\di}{\operatorname{div}}
  \newcommand{\curl}{\text{curl}}
  \newcommand{\Sb}{\mathbf{S}}
  \newcommand{\Sf}{\mathfrak{S}}
  \newcommand{\bil}{\mathcal B}
    \theoremstyle{definition}
    \newtheorem{thrm}{Theorem}[section]
  \newtheorem{theorem}[thrm]{Theorem}
  \newtheorem{lemma}[thrm]{Lemma}
  \newtheorem{corollary}[thrm]{Corollary}
  \newtheorem{remark}[thrm]{Remark}
  \newtheorem{definition}[thrm]{Definition}
  \newtheorem{assumption}[thrm]{Assumption}
  \newtheorem{proposition}[thrm]{Proposition}
  \numberwithin{equation}{section}
  \newcommand{\subjclass}[1]{\bigskip\noindent\emph{2010 Mathematics Subject Classification:}\enspace#1}
  \newcommand{\keywords}[1]{\noindent\emph{Keywords:}\enspace#1}
\begin{document}

\title{Distributed shape derivative via averaged adjoint method and applications}

\author[1]{Antoine Laurain\thanks{Universidade de S\~{a}o Paulo, Instituto de Matem\'atica e Estat\'istica, Departamento de Matem\'atica Aplicada, Rua do Mat\~{a}o, 1010
		Cidade Universitaria, CEP 05508-090, S\~{a}o Paulo, SP, Brazil}}
\author[2]{Kevin Sturm\thanks{Universit\"at Duisburg-Essen, Fakult\"at f\"ur Mathematik,
		Thea-Leymann-Stra\ss e 9,  D-45127 Essen, Germany;  {\bf email}: kevin.sturm@uni-due.de}}
\affil[1]{Department of Mathematics, University of S\~{a}o Paulo}
\affil[2]{Faculty of Mathematics, University of Essen-Duisburg}

\maketitle

\begin{abstract} 
	The structure theorem of Hadamard-Zol\'esio states that the derivative of a shape functional is a distribution on the boundary of the domain depending only on the normal perturbations of a smooth enough boundary. 
	Actually the domain representation, also known as distributed shape derivative, is more general than the boundary expression as it is well-defined for shapes having a lower regularity.
	It is customary in the shape optimization literature  to assume regularity of the domains and use the boundary expression of the shape derivative for numerical algorithms. 
	In this paper we describe several advantages of the distributed shape derivative in terms of generality, easiness of computation and numerical implementation. 
	We identify a tensor  representation of the distributed shape derivative, study its properties and show how it allows to recover the boundary expression directly. 
	We  use a novel Lagrangian approach, which is applicable to a large class of shape optimization problems, to compute the distributed shape derivative. 
	We also apply the technique to retrieve the distributed shape derivative for  electrical impedance tomography.  
	Finally we explain how to adapt the level set method to the distributed shape derivative framework and present numerical results. 
\end{abstract}
\subjclass{49Q10, 35Q93, 35R30, 35R05}\newline
\keywords{Shape optimization, distributed shape gradient, electrical impedance tomography, Lagrangian method, level set method}
\maketitle
\section*{Introduction}
In his research on elastic plates \cite{Ha} in 1907, Hadamard 
showed how to obtain the derivative of a shape functional $J(\Omega)$ 
by considering normal perturbations of the  boundary $\partial\Omega$ of a smooth set  $\Omega$. This fundamental result of shape optimization  was  made rigorous later by Zol\'esio \cite{MR2731611} in the so-called ``structure theorem''. When $J(\Omega)$  and the domain are smooth enough, one may also write the shape derivative as an integral over $\partial\Omega$, which is the canonical form in the shape optimization literature. 

However, when $\Omega$ is less regular, the shape derivative can  often be written as a domain integral even when the boundary expression is not available. The {\it domain expression} also known as {\it distributed shape derivative} has been generally ignored in the shape optimization literature for several reasons:  firstly the boundary representation provides a straightforward way of determining an explicit descent direction since it depends linearly on the boundary perturbation $\VV$ and not on its gradient, secondly this descent direction only needs to be defined on the boundary. When considering the domain expression, these two advantages disappear as the shape derivative is defined on $\Omega$ and depends on the gradient of $\VV$, so that a partial differential equation needs to be solved to obtain a descent direction $\VV$ on  $\Omega$. 

It seems that these drawbacks  would definitely rule out the distributed shape derivative, however they turn out to be less dramatic than expected in many situations and the domain formulation has other less foreseeable advantages  over the boundary representation. In this paper we advocate for the use of the distributed shape derivative and discuss the advantages of this formulation. 

The boundary representation has the following drawbacks. First of all  if the data is not smooth enough the integral representation does not exist so that the more general domain representation is the only rigorous alternative. Even when the boundary representation exists and has the form
$\int_{\partial\Omega} g\, \VV\cdot n $, it is usually not legitimate to choose $\VV\cdot n = -g$ on $\partial\Omega$ for a descent direction if $g$ is not smooth enough, for instance if $g\in L^1(\partial\Omega)$. Therefore, a smoother $\VV$ must be chosen, which requires to solve a partial differential equation on the boundary $\partial\Omega$. When taking $\VV\cdot n = -g$ is legitimate, it might still not be desirable as this may yield a $\VV$ with  low regularity, in which case one needs to regularize $\VV$ on the boundary as well. 
In these cases the first advantage of the boundary representation disappears. The second advantage of the boundary 
representation is that the perturbation field only needs to be defined on the boundary instead of on the whole domain, reducing the cost of the computation. Actually, the distributed shape derivative also has its support on the boundary, and may be computed in a small neighborhood of the boundary so that the additional cost is minimal. 
In addition, in most shape optimization applications, $g$ is the restriction of a function defined in a neighborhood of the boundary and not a quantity depending only on the boundary such as the curvature. Therefore from a practical point of view, $g$ must be evaluated in a neighborhood of $\partial\Omega$ anyway. 
Also, in many numerical applications, $\VV$ must be extended to a neighborhood of $\Gamma$ or even to the entire domain $\Omega$. This is the case for level set methods for instance, where the level set function must be updated on $\Omega$, or when one wishes to update the mesh along with the domain update, to avoid re-meshing the new domain. The distributed shape derivative then directly gives an extension of $\VV$ well-suited to the optimization problem. 

Recent results have shown that the distributed shape derivative is also more accurate than the boundary representation from a numerical point of view; see \cite{MR3348199} for a comparison. Indeed functions such as gradients of the state and adjoint state appearing in the distributed shape derivative only need to be defined at grid points and not on the interface. Therefore one avoids interpolation of these irregular terms.   This is particularly useful for transmission problems where the boundary representation requires to compute the jump of a function over the interface, a delicate and error-prone operation from the numerical point of view. 

Having considered these equivalent expressions of the shape derivative (i.e. boundary and domain expression) leads to a general form of the shape derivative using tensors. We introduce such a tensor representation in Section  \ref{sec:tensor_representation} which covers a large class of problems and in particular contains the boundary and domain expression. We show how this abstract form allows to identify simple relations between the domain and boundary expressions of the shape derivative.  

In this paper we also extend and simplify the averaged adjoint method from \cite{sturm13}, a  Lagrangian-type method which is well-suited to compute the shape derivative of a cost function in an efficient way.
Lagrangian methods are commonly used in shape optimization and have the advantage of providing the shape derivative without the need to compute the material derivative of the state; see \cite{MR862783,MR948649,MR2166150,sturm13,MeftBelh13,SturmHeomHint13}.    
Compared to these known shape-Lagrangian methods, the averaged adjoint method is fairly general due to minimal required conditions. The assumptions are for instance less restrictive than those required for the theorem of Correa-Seeger \cite{MR948649}, therefore it can be applied to more general situations such as non-convex functionals.  As the direct approach our method can also be applied for problems depending on nonlinear partial differential equations. In this paper we give an example of application to a transmission problem (in electrical impedance tomography - see Section \ref{section4}).  Our method  provides the domain expression of the shape derivative and the boundary expression can be computed easily from the tensor representation of the domain expression.

To complete the numerical implementation aspect, we also show  how the domain expression of the shape derivative can be used in the level set method framework \cite{MR2033390,FLS,MR965860,MR2806573,HLsegmentation,MR2472382,MR2459656,MR2356899}. 
The level set method can be modified to use the domain expression which leads to a method which is actually easier to implement.
Combining all these techniques, we obtain a straightforward and general  way of solving the shape optimization problem, from the rigorous theoretical computation of the shape derivative to the numerical implementation.

In Section \ref{section1} we recall the concept of shape derivative and the structure theorem on an abstract level. 
In Section \ref{section2} a shape-Lagrangian method, the averaged adjoint method, is described. 
In Section \ref{sec:tensor_representation} we identify a general tensor representation of the shape derivative, establish some of its properties, and give a few examples.
In Section \ref{section3} we explain how to compute descent directions for the distributed shape derivative for use in gradient methods. 
In Section \ref{section4} we apply the results of Sections \ref{section2} and  \ref{sec:tensor_representation} to the inverse problem of electrical impedance tomography. 
In Section \ref{section5} we extend the level set method to the case of the distributed shape derivative and finally  in Section \ref{section6} we show numerical results for various problems including the  problem of electrical impedance tomography.

\section{The structure theorem revisited}\label{section1}

Our aim in this section is to describe properties of the shape derivative on an abstract level and to emphasize that all representations of the shape derivative satisfy the same {\it structure theorem}.

Let $\mathcal{P}(D)$ be the set of subsets of $D\subset\R^d$ compactly contained in $D$, where the so-called ``universe''
$D\subset \R^d$ is assumed to be open and bounded.
Define for $k\geq 0$ and $0\leq \alpha \leq 1$,
\begin{align}
C^{k,\alpha}_c(D,\R^d) &:=\{\VV\in C^{k,\alpha}(D,\R^d)|\quad \VV\text{ has compact support in } D\}.
\end{align}
Also for given domain $\Omega\subset D$ with at least a $C^1$ boundary we introduce the space of vector field
\begin{align}
C^{k,\alpha}_{\partial \Omega}(D ,\R^d) &:=\{\VV\in C^{k,\alpha}_c( D,\R^d)|\quad \VV\cdot n =0 \text{ on } \partial \Omega\}
\end{align}
where $n$ is the outward unit normal vector to $\Omega$.

Consider a vector field $\VV\in C^{0,1}_c(D,\R^d)$ and the associated flow
$\Phi_t^{\VV}:\overline{D}\rightarrow \R^d$, $t\in [0,\tau]$ defined for each $x_0\in \overline{D}$ as $\Phi_t^{\VV}(x_0):=x(t)$, where $x:[0,\tau]\rightarrow \R$ solves 
\begin{align}\label{Vxt}
\begin{split}
\dot{x}(t)&= \VV(x(t))    \quad \text{ for } t\in (0,\tau),\quad  x(0) =x_0.
\end{split}
\end{align}
We will sometimes use the simpler notation $\Phi_t=\Phi_t^{\VV}$ when no confusion is possible.
Since $\VV\in C^{0,1}_c(D,\R^d)$ we have by Nagumo's theorem \cite{MR0015180} that for fixed $t\in [0,\tau]$ the flow $\Phi_t$ is a homeomorphism from $D$ into itself and maps boundary onto boundary and
interior onto interior. Further, we consider the family 
\begin{equation}\label{domain}
\Omega_t := \Phi_t^{\VV}(\Omega) 
\end{equation}
of perturbed domains. 

In the following let $J : \mathfrak P \rightarrow \R$ be a shape function defined on
some admissible set $\mathfrak P\subset \mathcal P(D)$.

\begin{definition}\label{def1} 
	The Eulerian semiderivative of $J$ at $\Omega$ in direction $\theta \in C^{0,1}_c(D,\R^d)$, when the limit exists,
	is defined by
	\ben
	dJ(\Omega)(\VV):= \lim_{t \searrow 0}\frac{J(\Omega_t)-J(\Omega)}{t}.
	\een
	\begin{itemize}
		\item[(i)] $J$ is said to be \textit{shape differentiable} at $\Omega$ if it has a Eulerian semiderivative at $\Omega$ for all $\VV \in C^\infty_c(D,\R^d)$ and the mapping
		\begin{align*}
		dJ(\Omega):  C^\infty_c(D,\R^d)  &  \to \R,\; \VV     \mapsto dJ(\Omega)(\VV)
		\end{align*}
		is linear and continuous, in which case $dJ(\Omega)(\VV)$ is called the \textit{shape derivative} at $\Omega$.
		\item[(ii)] 
		The shape derivative $dJ(\Omega)$ is of finite order if there is an integer $l\ge 0$ and a constant $c>0$ such that for each compact $K\subset D$ 
		$$|dJ(\Omega)(\theta)|  \le c \|\theta\|_{l} \quad \forall \theta\in C^\infty_c(K,\R^d),$$
		where $\|\theta\|_{l} := \sum_{|\alpha| \le l} |D^\alpha \theta|_\infty$.
		The smallest such integer $l\ge 0$ is called order of $dJ(\Omega)$.
	\end{itemize}
\end{definition}

The shape derivative from Definition \ref{def1} has a particular structure. 
Intuitively, it is clear that the form functional stays constant 
for a transformation $\Phi$ that leaves $\Omega$ unchanged, that is $\Phi(\Omega)=\Omega$,  even if some points inside $\Omega$ move  and consequently the shape derivative is zero in this case. This property is valid  when $\Omega$ is open or closed; cf. \cite{MR2731611}. Mathematically, this is expressed in the following basic theorem 
proved in \cite{zolesio1979identification}.

\begin{theorem}\label{lem:1}
	Let  $\Omega \in \mathfrak P$ be open or closed. Let $\VV \in C^{0,1}_c(D,\R^d)$ be a vector field with 
	compact support in $\Omega$ and denote by
	$\Phi_t$ its flow defined in \eqref{Vxt}.  
	Then we have 
	$$  dJ(\Omega)(\VV)=0. $$
\end{theorem}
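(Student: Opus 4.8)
The plan is to establish the stronger geometric fact that the perturbed domain is unchanged, $\Omega_t=\Omega$ for every $t\in[0,\tau]$; the conclusion $dJ(\Omega)(\VV)=0$ then follows immediately, since the difference quotient in Definition~\ref{def1} is identically zero and its limit as $t\searrow 0$ trivially exists and equals $0$.

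The first step is to note that $\VV$ vanishes on $D\setminus\Omega$. Indeed $\{x\in D:\VV(x)\neq 0\}\subset\supp\VV\subset\Omega$, so $D\setminus\Omega\subset\{x\in D:\VV(x)=0\}$; this uses no regularity of $\Omega$. Now fix $x_0\in D\setminus\Omega$: the constant curve $t\mapsto x_0$ satisfies $\dot x(t)=0=\VV(x_0)$ and $x(0)=x_0$, hence solves \eqref{Vxt}, and by uniqueness of solutions — which is available since $\VV\in C^{0,1}_c(D,\R^d)$ is Lipschitz — it is the solution. Therefore $\Phi_t(x_0)=x_0$, i.e.\ $\Phi_t$ is the identity on $D\setminus\Omega$, and in particular $\Phi_t(D\setminus\Omega)=D\setminus\Omega$.

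The second step invokes the property, already recalled via Nagumo's theorem, that for each fixed $t$ the flow $\Phi_t$ is a homeomorphism of $D$ onto itself, hence a bijection of $D$. A bijection of $D$ that maps $D\setminus\Omega$ onto $D\setminus\Omega$ must map the complementary set onto itself, so
\begin{equation*}
\Omega_t=\Phi_t(\Omega)=\Phi_t(D)\setminus\Phi_t(D\setminus\Omega)=D\setminus(D\setminus\Omega)=\Omega\qquad\text{for all }t\in[0,\tau].
\end{equation*}
Hence $J(\Omega_t)=J(\Omega)$ for all such $t$, and $\bigl(J(\Omega_t)-J(\Omega)\bigr)/t\equiv 0$, which yields $dJ(\Omega)(\VV)=0$.

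I do not foresee a genuine obstacle. The only step deserving a little care is the passage from ``$\Phi_t$ is the identity off $\Omega$'' to ``$\Phi_t(\Omega)=\Omega$'', which relies precisely on the bijectivity of $\Phi_t$ on $D$ furnished by Nagumo's theorem. It may also be worth observing that, in this particular argument, the hypothesis that $\Omega$ is open or closed is not actually needed: the identity $\VV\equiv 0$ on $D\setminus\Omega$ holds for any $\Omega$ with $\supp\VV\subset\Omega$, so the same reasoning covers both cases at once.
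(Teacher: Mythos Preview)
Your argument is correct. The paper does not actually supply its own proof of this theorem; it merely cites \cite{zolesio1979identification} (and \cite{MR2731611}) for the result, so there is nothing to compare on the level of technique. Your reasoning is the standard one: exploit uniqueness for the Lipschitz ODE to see that $\Phi_t$ fixes $D\setminus\Omega$ pointwise, then use bijectivity of $\Phi_t$ on $D$ to conclude $\Phi_t(\Omega)=\Omega$, whence the difference quotient vanishes identically.

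Your closing remark that the hypothesis ``$\Omega$ open or closed'' is not used in this argument is also correct; the assumption is traditional in the shape-optimization literature and is carried in the statement for that reason rather than because the proof needs it. One small point of presentation: the paper states that $\Phi_t$ is a homeomorphism of $D$ \emph{into} itself, so you might want to spell out in one line why it is onto (the inverse flow $\Phi_{-t}$ gives the inverse map), rather than reading ``onto'' directly from the quoted sentence. This is a cosmetic issue, not a gap.
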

%

Note that the shape derivative of $J(\Omega)$ always exists for vector fields with compact support in $\Omega$, even if it does not exist for other vector fields. An important consequence of Theorem \ref{lem:1}, also for numerical methods, is that {\it independently of the representation} of the shape derivative and the regularity of the domain $\Omega$, the values of $\VV$ outside the boundary of $\Omega$ have no influence on the shape derivative. 

\begin{corollary}
	Let $\Omega \in \mathfrak P$ be a set with $C^{1}$-boundary. 
	Assume that $J$ is shape differentiable on $\mathfrak P$.
	Let $\VV \in C^{0,1}_{\partial\Omega}(D,\R^d)$.
	Then we have
	$$ dJ(\Omega)(\VV)=0. $$
\end{corollary}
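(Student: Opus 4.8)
The plan is to reduce the corollary to Theorem~\ref{lem:1} by decomposing an arbitrary tangential field $\VV \in C^{0,1}_{\partial\Omega}(D,\R^d)$ into a piece with compact support in $\Omega$ plus a correction supported away from $\Omega$, modulo the fact that such a clean splitting is generally impossible when $\VV$ is merely tangential (not vanishing) on $\partial\Omega$. So instead I would argue directly via the flow: the key observation is that if $\VV\cdot n = 0$ on $\partial\Omega$ and $\partial\Omega$ is $C^1$, then the flow $\Phi_t^{\VV}$ leaves $\partial\Omega$ invariant, hence leaves $\Omega$ invariant (using that $\Phi_t$ maps interior onto interior and boundary onto boundary, as recalled after Nagumo's theorem), so $\Omega_t = \Phi_t^{\VV}(\Omega) = \Omega$ for all $t\in[0,\tau]$, and therefore $dJ(\Omega)(\VV) = \lim_{t\searrow 0}(J(\Omega_t)-J(\Omega))/t = 0$.

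The steps, in order, are as follows. First I would recall that a vector field tangent to a $C^1$ hypersurface $\partial\Omega$ generates a flow preserving $\partial\Omega$: this is the standard fact that integral curves of a field tangent to an (invariant) submanifold stay on it. Concretely, for $x_0\in\partial\Omega$ the solution $x(t)$ of \eqref{Vxt} remains on $\partial\Omega$ because $\VV(x(t))$ is tangent to $\partial\Omega$ at each point of the curve; one can make this rigorous by using a local $C^1$ defining function $\phi$ for $\partial\Omega$ (so $\partial\Omega = \{\phi = 0\}$ locally, $\nabla\phi\neq 0$) and checking $\frac{d}{dt}\phi(x(t)) = \nabla\phi(x(t))\cdot\VV(x(t)) = 0$ since $\nabla\phi$ is normal to $\partial\Omega$ and $\VV$ is tangential there, so $\phi(x(t))\equiv\phi(x_0)=0$. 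Second, since $\Phi_t$ is a homeomorphism of $D$ mapping $\partial\Omega$ onto $\partial\Omega$ and hence $\Omega$ onto a set with the same boundary and the same interior/exterior structure, conclude $\Phi_t(\Omega)=\Omega$ (here one uses that $\Omega$ is open with $C^1$ boundary, so $\Omega$ is determined by $\partial\Omega$ as one of the two components of $D\setminus\partial\Omega$ that $\Phi_t$ permutes, and $\Phi_0 = \mathrm{id}$ forces the correct component by continuity in $t$). Third, plug $\Omega_t=\Omega$ into Definition~\ref{def1} to get $dJ(\Omega)(\VV)=0$.

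The main obstacle is the second step: passing from ``$\Phi_t$ preserves $\partial\Omega$'' to ``$\Phi_t$ preserves $\Omega$''. A priori a homeomorphism fixing $\partial\Omega$ setwise could swap the inside and outside; one rules this out using $\Phi_0=\mathrm{id}$ together with continuity of $t\mapsto\Phi_t$ and the fact (from the passage after Nagumo's theorem) that $\Phi_t$ maps interior onto interior, so the interior component adjacent to $\partial\Omega$ — which is $\Omega$, since $\Omega$ has $C^1$ boundary — is mapped to itself. Alternatively, and perhaps more cleanly, one can avoid this discussion entirely by invoking Theorem~\ref{lem:1} on a slightly shrunken field: mollify and cut off $\VV$ to produce fields supported in $\Omega$ that approximate it, but this runs into the same tangency issue, so the direct flow-invariance argument sketched above is the honest route. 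A minor additional point is ensuring $\tau$ can be taken uniform so the flow is globally defined on $[0,\tau]\times\overline D$, which is automatic since $\VV\in C^{0,1}_c(D,\R^d)$ has compact support.
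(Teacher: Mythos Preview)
Your overall strategy---show $\Phi_t^{\VV}(\Omega)=\Omega$ for all $t$ and conclude directly from Definition~\ref{def1}---is exactly what the paper intends; the corollary is presented there without proof, as an immediate consequence of the discussion around Nagumo's theorem.

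However, your defining-function argument for Step~1 is circular as written. You compute $\frac{d}{dt}\phi(x(t))=\nabla\phi(x(t))\cdot\VV(x(t))$ and assert this vanishes ``since $\nabla\phi$ is normal to $\partial\Omega$ and $\VV$ is tangential there''. But that orthogonality only holds at points $x(t)\in\partial\Omega$, which is precisely what you are trying to establish. Knowing that $g(t):=\phi(x(t))$ satisfies $g(0)=0$ and $g'(t)=0$ \emph{whenever} $g(t)=0$ does not force $g\equiv 0$ (consider $g(t)=t^2$). A Gronwall repair---bounding $|g'(t)|\le C|g(t)|$ via Lipschitz continuity of $y\mapsto\nabla\phi(y)\cdot\VV(y)$---would need $\nabla\phi$ Lipschitz, i.e.\ $\partial\Omega\in C^{1,1}$, which is stronger than the stated $C^1$ hypothesis.

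The clean fix at $C^1$ regularity is to invoke Nagumo's theorem directly (the paper already cites it): apply it to the closed sets $\overline\Omega$ and $\overline{D\setminus\Omega}$. At a boundary point the Bouligand tangent cone to $\overline\Omega$ is the half-space $\{v:v\cdot n\le 0\}$, and $\VV\cdot n=0$ certainly lies in it; symmetrically for the complement. Nagumo then gives $\Phi_t(\overline\Omega)\subset\overline\Omega$ and $\Phi_t(\overline{D\setminus\Omega})\subset\overline{D\setminus\Omega}$ for $t\ge 0$, hence $\Phi_t(\Omega)=\Omega$ since $\Phi_t$ is a homeomorphism. This also makes your Step~2 component/continuity discussion unnecessary.
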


The previous discussion immediately yields the following fundamental result of shape optimization.
\begin{theorem}[Structure Theorem]\label{thm:structure_theorem}
	Assume $\Gamma :=\partial \Omega $ is compact and $J$ is shape differentiable. Denote the \textit{shape derivative} by
	\ben
	dJ(\Omega ):C^\infty_c(D,\R^d)\rightarrow \R,\quad \theta \mapsto dJ(\Omega)(\theta).
	\een 
	Assuming $dJ(\Omega )$ is of order $k\ge 0$ and $\Gamma$ of class $C^{k+1}$, then there exists a linear and continuous functional $g: C^k(\Gamma)\rightarrow \R$ such that
	\ben\label{volume}
	dJ(\Omega)(\VV)=g(\VV_{|\Gamma}\cdot n),
	\een
\end{theorem}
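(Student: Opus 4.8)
The plan hinges on a single idea: by the Corollary the value $dJ(\Omega)(\VV)$ cannot depend on the part of $\VV$ tangential to $\Gamma$, hence it must be a functional of the scalar $\VV|_\Gamma\cdot n$ alone; the structure theorem is this statement made precise through the theory of distributions supported on a submanifold. First I would invoke Definition~\ref{def1}(ii): being linear, continuous and of order $k$, $dJ(\Omega)$ extends uniquely to a continuous linear functional $T$ on $C^k_c(D,\R^d)$ — a vector distribution of order $\le k$ — with $T=dJ(\Omega)$ on $C^\infty_c(D,\R^d)$. The Corollary (which rests on Theorem~\ref{lem:1}) tells us that $T$ annihilates every $\VV\in C^\infty_c(D,\R^d)$ with $\VV|_\Gamma\cdot n=0$; in particular $T$ kills all smooth fields supported in $D\setminus\Gamma$, so $\operatorname{supp}T\subseteq\Gamma$.

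Next I would use the hypothesis $\Gamma\in C^{k+1}$. Since $\Gamma$ is compact it has a tubular neighbourhood $\mathcal U$ of $\Gamma$ with $\overline{\mathcal U}\subset D$ compact, on which the signed distance function $b$ is of class $C^{k+1}$, so that $N:=\nabla b$ is a $C^k$ unit vector field extending $n$; write $\partial_N\VV:=N\cdot\nabla\VV$. The classical structure theorem for distributions of finite order supported on a hypersurface then gives, near $\Gamma$, a representation $\langle T,\VV\rangle=\sum_{j=0}^{k}\langle\mu_j,(\partial_N^{j}\VV)|_\Gamma\rangle$, with $\mu_j$ an $\R^d$-valued distribution on $\Gamma$ of order $\le k-j$.

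Now I would bring in the two consequences of the vanishing of $T$ on tangential fields. Testing $T$ against fields that are tangential on $\Gamma$ but whose transverse jets $(\partial_N^{j}\VV)|_\Gamma$, $j\ge1$, are otherwise unconstrained yields $\mu_j=0$ for all $j\ge1$; hence $\langle T,\VV\rangle=\langle\mu_0,\VV|_\Gamma\rangle$, and the same vanishing now says that $\mu_0$ annihilates every tangential field on $\Gamma$. Splitting $\VV|_\Gamma=(\VV|_\Gamma\cdot n)\,n+(\text{tangential part})$ we obtain $\langle T,\VV\rangle=\langle\mu_0,(\VV|_\Gamma\cdot n)\,n\rangle$. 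I would then \emph{define} $g(\zeta):=\langle\mu_0,\zeta\,n\rangle$; it is linear, and it is continuous on $C^k(\Gamma)$ because $\mu_0$ has order $\le k$ and $\zeta\mapsto\zeta\,n$ is bounded from $C^k(\Gamma)$ to $C^k(\Gamma;\R^d)$ — and it is precisely $\Gamma\in C^{k+1}$ that guarantees $n\in C^k(\Gamma;\R^d)$, hence also that $\VV|_\Gamma\cdot n\in C^k(\Gamma)$ at all. For $\VV\in C^\infty_c(D,\R^d)$ this yields $dJ(\Omega)(\VV)=\langle T,\VV\rangle=g(\VV|_\Gamma\cdot n)$, as claimed.

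The conceptual part — that $dJ(\Omega)$ sees only $\VV|_\Gamma\cdot n$ — is immediate from the Corollary; the step I expect to be the main obstacle is the low-regularity bookkeeping in the two middle steps. One must justify the extension $T$ and the hypersurface structure theorem with $\Gamma$ only of class $C^{k+1}$ and the objects involved only $C^k$, and, most delicately, propagate the Corollary's vanishing — given only for smooth, or at best Lipschitz, fields — down to the coefficient distributions $\mu_j$. Equivalently, one needs $T$ to annihilate \emph{every} $\VV\in C^k_c(D,\R^d)$ with $\VV|_\Gamma\cdot n=0$, not merely the smooth ones: for $k\ge1$ this is immediate since such a $\VV$ is Lipschitz and tangential on $\Gamma$, so $\VV\in C^{0,1}_{\partial\Omega}(D,\R^d)$ and the Corollary applies directly, whereas the case $k=0$ (with $\Gamma$ only $C^1$) is genuinely more delicate and requires approximating a $C^0$ tangential field in the supremum norm by Lipschitz tangential ones — e.g.\ via bi-Lipschitz straightenings of $\Gamma$ — and passing to the limit. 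A more hands-on variant avoids the structure theorem: build from $b$, $N$ and a cut-off a bounded normal-lifting $E:C^k(\Gamma)\to C^k_c(D,\R^d)$ with $(E\zeta)|_\Gamma\cdot n=\zeta$ and set $g:=T\circ E$; then the same key fact shows $dJ(\Omega)(\VV)=\langle T,\VV\rangle=\langle T,E(\VV|_\Gamma\cdot n)\rangle=g(\VV|_\Gamma\cdot n)$, since $\VV-E(\VV|_\Gamma\cdot n)$ has vanishing normal trace.
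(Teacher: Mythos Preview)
The paper does not actually prove this theorem: its entire proof is the single line ``See \cite[pp.~480--481]{MR2731611}'', i.e.\ a citation to Delfour--Zol\'esio. Your outline is essentially the standard argument given there --- extend $dJ(\Omega)$ to a distribution of order $k$, localize its support to $\Gamma$ via Theorem~\ref{lem:1}, decompose along the normal using the structure theorem for distributions supported on a hypersurface, and kill the higher transverse layers and the tangential part of $\mu_0$ using the Corollary --- so there is nothing to compare against in the paper itself, and your approach is the expected one.
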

\bproof
See \cite[pp. 480-481]{MR2731611}.
\eproof


\section{Shape derivatives via averaged adjoint method}\label{section2}
Lagrangian methods in shape optimization allow to compute the shape derivative of functions depending on the solution of partial differential equations without the need to compute the material derivative of the partial differential equations; see \cite{MR2731611} for a description of such a method in the linear case. 
Here we extend  and simplify the averaged adjoint method, a Lagrangian-type method introduced in \cite{sturm13}. 
With this approach the computation of the domain representation of the shape derivative is fast, the retrieval of the boundary form is convenient, 
and no saddle point assumptions is required unlike in \cite{MR2731611}.

Let two vector spaces $E = E(\Omega), F=F(\Omega)$ and  $\tau>0$ be given, and consider a parameterization 
	$\Omega_t = \Phi_t(\Omega)$ for $t\in [0,\tau]$. 
	Ultimately, our goal is to differentiate shape functions of the type 
	$J(\Omega_t)$ which can be written using a Lagrangian as $J(\Omega_t) = \mathcal{L}(\Omega_t, u^t,\hat\psi)$,
	where $u^t\in E(\Omega_t)$ and $\hat\psi\in F(\Omega_t) $.
	The main appeal of the Lagrangian is that 
	we actually only need to compute the derivative with respect to $t$ of $\mathcal{L}(\Omega_t,\hat\varphi,\hat\psi)$ to compute the derivative of $J(\Omega_t)$,  indeed this is the main result of Theorem \ref{thm:sturm}, but this requires a few explanations.

	Since $\mathcal{L}(\Omega_t,\hat\varphi,\hat\psi)$,  is often constituted of integrals on $\Phi_t(\Omega)$, using a change of variable we can rewrite these integrals to integrals on the fixed domain $\Omega$, and consequently transfer the dependence on $t$ to the integrand. 
	However, in the process appear the composed functions $\hat\varphi\circ\Phi_t\in E(\Omega)$ and $\hat\psi\circ\Phi_t\in F(\Omega)$, whose derivatives are not straightforward to compute since $\hat\varphi$ and $\hat\psi$ are defined on the moving spaces $E(\Omega_t)$ and $F(\Omega_t)$.

	Fortunately, and this is the crucial point of the shape-Lagrangian approach,  to compute the shape derivative we can reparameterize the problem by considering
	$\mathcal{L}(\Omega_t, \Psi_t\circ \varphi, \Psi_t\circ \psi)$ instead of $\mathcal{L}(\Omega_t,\hat\varphi,\hat\psi)$, where 
	$\Psi_t$ is an appropriate bijection between
	$E(\Omega)$ and $E(\Omega_t)$, and $\varphi\in E(\Omega)$, $\psi\in F(\Omega)$.
	Now the change of variable in the integrals yields functions $\varphi$ and $\psi$ in the integrand, which are defined on fixed spaces.
	In this paper $E$ and $F$ are $H^1$-spaces,  and in this case we may consider the particular reparameterization
	$\mathcal{L}(\Omega_t,\varphi\circ\Phi_t^{-1},\psi\circ\Phi_t^{-1})$. 
	For spaces such as 
	$H(\curl;\Omega)$, other transformations $\Psi_t$ can be used;  see \cite{HLY,MR2971616,MR2002150}. 

Thus we are led to consider general functions of the type $G:[0,\tau]\times E\times F \rightarrow \R$ with
	$$ G(t,\varphi,\psi):
	=\mathcal{L}(\Phi_t(\Omega),\varphi\circ\Phi_t^{-1},\psi\circ\Phi_t^{-1}).$$
	This is precisely what we do in \eqref{eq:G_on_the_moved_domain} when showcasing an application of the method.
	The main result of this section, Theorem \ref{thm:sturm}, shows that to obtain the shape derivative of $\mathcal{L}$, it is enough to compute the derivative with respect to $t$ of $G$ while assigning the proper values to $\varphi$ and $\psi$. The main ingredient is the introduction of the averaged adjoint equation.

	In addition, in this paper we consider the following specific form
	\ben
	\label{G_lag}
	G(t,\varphi,\psi):= a(t,\varphi,\psi) + b(t,\varphi),
	\een
	where
	$$ a:[0,\tau]\times E \times  F \rightarrow \R, \qquad b:[0,\tau]\times E \rightarrow \R ,$$ 
	are functions such that $\psi \mapsto a(t,\varphi,\psi)$ is linear for all 
	$t\in [0,\tau]$ and $\varphi\in E$. The function $G$ is commonly called {\it Lagrangian}, hence the name of the method.  
	In the applications we have in mind, the function 
	$b$ arises from the objective function while $a$ corresponds to the constraint, after transporting back  to the fixed domain $\Omega$.

Throughout the paper, the Greek letters $\varphi$ and $\psi$ are used for variables, while the roman letters $u,p$ are used for the solutions of the state and adjoint states, respectively.

Let us  assume that for each $t\in [0,\tau]$ the equation
	\ben\label{eq:state_G}
	d_\psi G(t,u^t,0;\hat\psi)= a(t,u^t, \hat\psi) = 0\;\text{ for all } \hat\psi \in  F.
	\een
	admits a unique solution $u^t\in E$. 
Further we make the following assumptions for  $G$.
\begin{thmx}
	\label{amp:gateaux_diffbar_G}
	\label{amp:affine-linear}
	For every $(t,\psi)\in [0,\tau]\times F$
	\begin{enumerate}
		\item[(i)]  $[0,1]\ni s\mapsto G(t,su^t+s(u^t-u^0),\psi)$ is absolutely continuous. 
		\item[(ii)] $[0,1]\ni s\mapsto d_\varphi G(t,su^t+(1-s)u^0,\psi;\hat{\varphi})$ belongs to $L^1(0,1)$ for all $\hat{\varphi}\in E$.
	\end{enumerate}
\end{thmx}
When Assumption \ref{amp:affine-linear} is satisfied,  for $t\in [0,\tau]$ we introduce the \textit{averaged adjoint equation} associated with $u^t$ and $u^0$: 
Find $p^t\in F$ such that 
\begin{equation}\label{averated_}
\int_0^1 d_\varphi G(t,su^t+(1-s)u^0,p^t;\hat{\varphi})\, ds =0 \quad \text{ for all } \hat{\varphi}\in E.
\end{equation}
Notice that, in view Assumption \ref{amp:affine-linear}, for all $t\in[0,\tau]$,
\ben\label{eq:main_averaged}
G(t,u^t,p^t)-G(t,u^0,p^t) = \int_0^1 d_\varphi G(t,su^t+(1-s)u^0,p^t;u^t-u^0)\, ds =0.
\een
We can now state the main result of this section.

	\begin{thmy}
		We assume that
		$$ \lim_{t\searrow 0} \frac{G(t,u^0,p^t)-G(0,u^0,p^t)}{t}=\partial_tG(0,u^0,p^0).$$
	\end{thmy}

	\begin{theorem}
		\label{thm:sturm}
		Let  (H0) and (H1) be satisfied and assume there exists a unique solution $p^t$  of the averaged adjoint equation \eqref{averated_}.
		Then for $\psi \in F$  we obtain
		\begin{equation}\label{eq:dt_G_single}
		\dt b(t,u^t) |_{t=0} = \dt(G(t,u^t,\psi))|_{t=0}=\partial_t G(0,u^0,p^0).
		\end{equation}
	\end{theorem}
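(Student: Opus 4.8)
The plan is to compute $\frac{d}{dt} G(t,u^t,p^t)$ in two different ways and compare. Observe first that the asserted chain of equalities splits into three claims: (a) $\frac{d}{dt}b(t,u^t)|_{t=0} = \frac{d}{dt}G(t,u^t,\psi)|_{t=0}$ for any $\psi\in F$; (b) this quantity equals $\frac{d}{dt}G(t,u^t,p^t)|_{t=0}$; and (c) the latter equals $\partial_t G(0,u^0,p^0)$. Claim (a) is immediate from the structure \eqref{G_lag}: since $u^t$ solves the state equation \eqref{eq:state_G}, we have $a(t,u^t,\psi)=0$ for every $\psi\in F$, hence $G(t,u^t,\psi)=a(t,u^t,\psi)+b(t,u^t)=b(t,u^t)$ identically in $t$; differentiating at $t=0$ gives (a), and in particular the value is independent of $\psi$, so taking $\psi=p^t$ (or $\psi=p^0$) is legitimate and yields (b).

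For claim (c), I would write, for $t\in(0,\tau]$, the telescoping identity
\begin{equation*}
\frac{G(t,u^t,p^t)-G(0,u^0,p^0)}{t} = \frac{G(t,u^t,p^t)-G(t,u^0,p^t)}{t} + \frac{G(t,u^0,p^t)-G(0,u^0,p^t)}{t} + \frac{G(0,u^0,p^t)-G(0,u^0,p^0)}{t}.
\end{equation*}
The first term on the right vanishes identically by \eqref{eq:main_averaged}, which is exactly the defining property of the averaged adjoint $p^t$ combined with the absolute continuity in Assumption (H0)(i). The third term also vanishes: since $p^t$ solves \eqref{averated_} at time $t$, and at $t=0$ the averaged adjoint equation collapses (the averaging interval degenerates, as $u^0=u^0$) to $d_\varphi G(0,u^0,p^0;\hat\varphi)=0$ for all $\hat\varphi\in E$, while $G(0,u^0,\cdot)$ — being $\psi\mapsto a(0,u^0,\psi)+b(0,u^0)$ with $a(0,u^0,\cdot)$ linear and $a(0,u^0,\hat\psi)=0$ — is actually constant in its second-to-last argument once $u^0$ solves the state equation; hence $G(0,u^0,p^t)=G(0,u^0,p^0)=b(0,u^0)$ and the third term is zero. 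What remains is
\begin{equation*}
\lim_{t\searrow 0}\frac{G(t,u^t,p^t)-G(0,u^0,p^0)}{t} = \lim_{t\searrow 0}\frac{G(t,u^0,p^t)-G(0,u^0,p^t)}{t},
\end{equation*}
and the right-hand side is precisely $\partial_t G(0,u^0,p^0)$ by Assumption (H1). Since the left-hand side is, by (a)–(b), also equal to $\frac{d}{dt}b(t,u^t)|_{t=0}$, the existence of the semiderivative and the stated formula follow simultaneously.

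The routine parts are the algebraic manipulations above; the only genuinely delicate point — and the one where the hypotheses are doing real work — is ensuring that the first telescoping term is exactly zero rather than merely $o(t)$. This hinges on Assumption (H0)(i)–(ii): absolute continuity of $s\mapsto G(t,su^t+(1-s)u^0,p^t)$ together with $L^1$-integrability of the derivative lets us apply the fundamental theorem of calculus to write $G(t,u^t,p^t)-G(t,u^0,p^t)=\int_0^1 d_\varphi G(t,su^t+(1-s)u^0,p^t;u^t-u^0)\,ds$, which the averaged adjoint equation \eqref{averated_} forces to vanish. The other subtlety worth stating carefully is the degeneration of the averaged adjoint equation at $t=0$ to the classical adjoint equation, so that $p^0$ is well-defined and $G(0,u^0,p^t)$ is independent of the choice of $p^t$; this is what makes the third telescoping term disappear. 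No compactness, saddle-point, or convexity assumption enters, which is the advertised advantage over the Correa–Seeger framework.
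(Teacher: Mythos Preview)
Your proof is correct and follows essentially the same route as the paper: both rely on the identity $G(t,u^t,\psi)=b(t,u^t)$ from the state equation, the vanishing of $G(t,u^t,p^t)-G(t,u^0,p^t)$ via the averaged adjoint, and Assumption~(H1) for the remaining quotient. The one cosmetic difference is that the paper inserts $\psi=p^t$ at \emph{both} endpoints, writing $g(t)=G(t,u^t,p^t)-G(0,u^0,p^t)$, which gives a two-term telescope and makes your third term superfluous; your observation that $G(0,u^0,\cdot)$ is constant is precisely what licenses that substitution.
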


\begin{proof}
	Put $g(t) := G(t,u^t,0)-G(0,u^0,0)$, and note that $g(t) = G(t,u^t,\psi)-G(0,u^0,\psi)$ for all $\psi \in F$ and $g(0)=0$. We have to show that 
	$$g'(0):= \lim_{t\searrow 0}\frac{G(t,u^t,0)-G(0,u^0,0)}{t} \quad \text{ exists. }$$  
	Thanks to Assumption \ref{amp:affine-linear} we can define the averaged adjoint $p^t$ and using that $G$ is affine with respect 
	to the third argument, we obtain 
	$$g(t) =  \underbrace{G(t,u^t,p^t)-G(t,u^0,p^t)}_{=0 \mbox{ in view of } \eqref{eq:main_averaged}} + G(t,u^0,p^t)-G(0,u^0,p^t) .$$
	Dividing by $t>0$ and using Assumption (H1) yields
	$$ g'(0) = \lim_{t\searrow 0} \frac{g(t) - g(0)}{t} =  \lim_{t\searrow 0}\frac{G(t,u^0,p^t)-G(0,u^0,p^t)}{t}  = \partial_t G(0,u^0,p^0) $$
	which concludes the proof.
\end{proof}

\begin{remark}
	In terms of $a$ and $b$, equation \eqref{averated_} reads:
	$$ \int_0^1 d_\varphi a(t,su^t+(1-s)u^0,p^t;\hat{\varphi})\, ds = - \int_0^1 d_\varphi b(t,su^t+(1-s)u^0;\hat{\varphi})\, ds $$
for all $\hat{\varphi}\in E$.
	If $\varphi \mapsto a(t,\varphi,\psi)$ is in addition linear, then  \eqref{averated_}  becomes 
	$$        a(t,\hat{\varphi},p^t) = - \int_0^1 d_\varphi b(t,su^t+(1-s)u^0;\hat{\varphi})\, ds $$
 for all $\hat{\varphi}\in E$.
\end{remark}

\section{Tensor representation of the shape derivative}\label{sec:tensor_representation}
In this section we identify tensor representations of the shape derivative that correspond to a large class of problems studied in the literature for PDE-constrained shape optimization. This tensor representation has several interesting properties that we investigate. In particular we exhibit the link between this tensor representation and the usual boundary expression of the shape derivative.
\subsection{Definition and properties}\label{def_prop}
\begin{definition}\label{def:tensor}
Let $\Omega \in \mathfrak P$ be a set with $C^k$-boundary, $k\ge 1$. 
	A shape differentiable function $J$ of order $k$ is said to admit a tensor representation if
	there exist tensors $\Sb_l\in  L^1(D, \mathcal L^l(\R^d,\R^d))$  and $\Sf_l\in L^1(\partial \Omega; \mathcal L^l(\R^d,\R^d))$, $l=0,..,k$,
	such that
	\ben\label{ea:volume_from}
	dJ(\Omega)(\VV) = \sum_{l=0}^k \int_D \Sb_l\cdot D^l\VV \,dx + \int_{\partial \Omega} \Sf_l\cdot D^l_\Gamma \VV\, ds \quad \text{ for all } \VV\in C^k_c(D,\R^d),
	\een
	where $D_\Gamma \theta: = D\theta -(D\theta n)\otimes n$ is the tangential derivative of $\theta$ along $\partial\Omega$. Here 
		$\mathcal L^l(\R^d, \R^d) $ denotes the space of multilinear maps from $\R^d \times \cdots  \times \R^d$ to $\R^d$.
\end{definition}
Most if not all examples involving PDEs from classical textbooks \cite{MR2731611,MR2512810,SokZol92} can be written in the form \eqref{ea:volume_from}.

\begin{remark}
	\begin{itemize}
		\item[(a)] A particular case of the tensor representation \eqref{ea:volume_from}  is the Eshelby energy momentum tensor in continuum mechanics introduced in \cite{MR0489190}; see also \cite{MR3013681}. In this case only $\Sb_1$ is not zero.
		\item[(b)] When $J$ is is shape differentiable in $\Omega$ then by definition  $\theta \mapsto dJ(\Omega)(\theta)$ is a distribution, and if  $\partial \Omega$ is compact, the distribution $\theta \mapsto dJ(\Omega)(\theta)$  is of finite order.
		\item[(c)] If $dJ(\Omega)$ is of order $k=1$ and $ |dJ(\Omega)(\VV)|  \le C\|\VV\|_{H^1(D,\R^d)} $ for all $\theta\in C^\infty_c(D,\R^d)$
		then by density of $C^\infty_c(D,\R^d)$ in $H^1_0(D,\R^d)$ the derivative $dJ(\Omega)$ 
		extends to a continuous functional on $H^1_0(D,\R^d)$, that is,
		$$ |\widehat{dJ(\Omega)}(\VV)|  \le c\|\VV\|_{H^1(D,\R^d)}  \quad \text{ for all } \theta\in H^1_0(D,\R^d).  $$ 
		Therefore by the  theorem of Riesz, we obtain a vector field $W$ in $H^1_0(D,\R^d)$ 
		such that
		$$ \forall \theta\in H^1_0(D,\R^d),\quad  \widehat{dJ(\Omega)}(\VV) = \int_D DW \cdot D \theta  + W \cdot \theta \, dx  $$
		and this defines a tensor representation with $\Sb_1= DW$, $\Sb_0=W$, $\Sf_1=0$ and $\Sf_0=0$.
		\item[(d)] The assumption that $\Omega$ be a set of class $C^k$ can be reduced if $\Sf_l\equiv 0$ for all $0\leq k_0\leq l\leq k$.
	\end{itemize}
\end{remark}

The tensor representation \eqref{ea:volume_from} is not unique in the sense that there might be
several ways to choose the tensors $\Sb_l$ and $\Sf_l$. This is expressed by the fact that 
these tensors are correlated. We describe these relations below in the case $k=1$ in Proposition 
\ref{tensor_relations}, which also describes the link between the tensor representation and the usual boundary representation \eqref{volume} of the shape derivative. 
\begin{proposition}\label{tensor_relations}
	Let $\Omega $ be a subset of $D$ with $C^1$-boundary. Suppose
	that the derivative  $dJ(\Omega)$ has the representation
	\ben\label{eq:first_order_tensor}
	dJ(\Omega)(\VV) = \int_D \Sb_1\cdot D\VV +  \Sb_0\cdot \VV\,dx + \int_{\partial \Omega} \Sf_1\cdot D_\Gamma \VV + \Sf_0\cdot \VV\, ds .
	\een
	If $\Sb_l$ is of class $W^{1,1}$ in $\Omega$ and $D\setminus \overline \Omega$ then indicating by $+$ and $-$ the restrictions of the tensors to $\Omega$ and $D\setminus\overline \Omega$, respectively, we get
	\ben \label{eq:equvilibrium_strong}
	\begin{split}
		-\divv(\Sb_1^+) + \Sb_0^+ &= 0 \quad \text{ in } \Omega \\
		-\divv(\Sb_1^-) + \Sb_0^- &= 0 \quad \text{ in } D\setminus\overline \Omega.
	\end{split}
	\een
	Moreover, 
	we can rewrite the tensor representation  as a distribution on the boundary:
	$$dJ(\Omega)(\VV) = \int_{\partial \Omega} [(\Sb_1^+-\Sb_1^-)n] \cdot\theta + \Sf_1\cdot D_\Gamma \VV + \Sf_0\cdot \VV \, ds $$
	where $n$ denotes the outward unit normal vector to $\Omega$.
	
	If the boundary $\partial \Omega$ is $C^2$ and $\Sf_1\in W^{1,1}(\partial \Omega; \mathcal L^1(\R^d,\R^d))$, then we obtain a more regular distribution, the so-called boundary expression of the shape derivative: 
	\ben\label{eq:general_boundary_exp}
	dJ(\Omega)(\theta) = \int_{\partial \Omega} g_1\, \theta\cdot n\, ds,
	\een
	where
	\begin{equation}
	\label{g1} 
	g_1 :=  [(\Sb_1^+-\Sb_1^-)n]\cdot n + \Sf_0\cdot n + \Sf_1\cdot D_\Gamma n - \divv_\Gamma (\Sf_1^T n) + \mathcal{H}(\Sf_1^T n\cdot n). \end{equation}
	and $\mathcal{H}=\divv_\Gamma n$ denotes the mean curvature\footnote{We define the mean curvature as the sum of the principal curvatures $\kappa_i$, that is, $\mathcal H := \sum_{i=1}^d \kappa_i$.} of $\partial\Omega$ while $\divv_\Gamma := \tr(D_\Gamma)$ is the tangential divergence.  
\end{proposition}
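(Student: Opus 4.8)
The plan is to start from the tensor representation \eqref{eq:first_order_tensor} and successively transfer derivatives off $\VV$ by integration by parts, first in the bulk and then on the boundary, using the structure theorem as a sanity check at the end.

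\textit{Step 1: Bulk integration by parts and recovering the equilibrium equations.} First I would restrict the volume integral in \eqref{eq:first_order_tensor} to $\VV\in C^\infty_c(\Omega,\R^d)$, i.e. vector fields supported away from $\partial\Omega$. For such $\VV$ the boundary integrals vanish and, by Theorem \ref{lem:1}, $dJ(\Omega)(\VV)=0$. Using $\Sb_1\in W^{1,1}(\Omega)$, an integration by parts gives $\int_\Omega (-\divv(\Sb_1^+)+\Sb_0^+)\cdot\VV\,dx = 0$ for all such test fields, hence \eqref{eq:equvilibrium_strong} holds in $\Omega$; the same argument on $D\setminus\overline\Omega$ (again the shape derivative vanishes for fields supported in the exterior, since they leave $\Omega$ unchanged) gives the second identity in \eqref{eq:equvilibrium_strong}. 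Care must be taken that these are identities in $L^1$, justified because $\divv(\Sb_1^\pm)\in L^1$.

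\textit{Step 2: Reduction to a boundary distribution.} Next, for general $\VV\in C^1_c(D,\R^d)$, I would split the bulk integral $\int_D = \int_\Omega + \int_{D\setminus\overline\Omega}$ and integrate by parts on each piece. On $\Omega$ the normal is $n$; on $D\setminus\overline\Omega$ the outward normal is $-n$. The interior terms combine via \eqref{eq:equvilibrium_strong} to zero, and the two boundary contributions $\int_{\partial\Omega}(\Sb_1^+n)\cdot\VV\,ds$ and $-\int_{\partial\Omega}(\Sb_1^-n)\cdot\VV\,ds$ (sign from the exterior normal) add up to $\int_{\partial\Omega}[(\Sb_1^+-\Sb_1^-)n]\cdot\VV\,ds$. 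Together with the already-present boundary terms this yields the claimed intermediate expression $dJ(\Omega)(\VV)=\int_{\partial\Omega}[(\Sb_1^+-\Sb_1^-)n]\cdot\VV + \Sf_1\cdot D_\Gamma\VV + \Sf_0\cdot\VV\,ds$.

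\textit{Step 3: Tangential integration by parts and extracting the normal part.} Now assume $\partial\Omega$ is $C^2$ and $\Sf_1\in W^{1,1}(\partial\Omega)$. The remaining work is to rewrite the term $\int_{\partial\Omega}\Sf_1\cdot D_\Gamma\VV\,ds$ so that only $\VV\cdot n$ appears. I would use the tangential Green/Stokes formula on the closed hypersurface $\partial\Omega$, namely $\int_{\partial\Omega}\divv_\Gamma w\,ds = \int_{\partial\Omega}\mathcal H\,(w\cdot n)\,ds$ for a vector field $w$ on $\partial\Omega$, applied with $w$ built from $\Sf_1^T$ acting on $\VV$. Writing $\Sf_1\cdot D_\Gamma\VV = \Sf_1^T n \cdot (D_\Gamma\VV\, n)$-type pieces plus a genuine tangential-divergence piece, one gets a term $-\divv_\Gamma(\Sf_1^T n)$ hitting $\VV$, a curvature term $\mathcal H(\Sf_1^T n\cdot n)$, and a term $\Sf_1\cdot D_\Gamma n$ coming from differentiating the normal itself. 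Collecting all coefficients of $\VV$ on $\partial\Omega$ gives a boundary distribution of the form $\int_{\partial\Omega} G\cdot\VV\,ds$ for some $G\in L^1(\partial\Omega;\R^d)$. Finally, since $dJ(\Omega)(\VV)=0$ for all $\VV\in C^{0,1}_{\partial\Omega}(D,\R^d)$ by the Corollary, the tangential part of $G$ must vanish, so $G = g_1 n$ with $g_1 = G\cdot n$, which after bookkeeping of the above terms is exactly \eqref{g1}, and \eqref{eq:general_boundary_exp} follows.

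\textit{Main obstacle.} The delicate step is Step 3: correctly decomposing $\Sf_1\cdot D_\Gamma\VV$ into a pure tangential-divergence part (to which the surface Stokes theorem applies) plus correction terms, and bookkeeping the extra curvature contribution $\mathcal H(\Sf_1^Tn\cdot n)$ and the $\Sf_1\cdot D_\Gamma n$ term that arise because $n$ varies along $\partial\Omega$ and because $D_\Gamma\VV$ is not symmetric. One must also be careful that integration by parts on $\partial\Omega$ produces no boundary-of-boundary terms, which is fine here since $\partial\Omega$ is compact without boundary, and that all manipulations stay within $L^1$/$W^{1,1}$ regularity so the formulas are meaningful. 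The identification $G=g_1 n$ via the Corollary is what pins down the tangential components and is the conceptual heart of why the final expression depends only on $\VV\cdot n$.
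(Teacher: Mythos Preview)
Your Steps 1 and 2 are correct and coincide with the paper's argument: restrict to compactly supported fields and use Theorem~\ref{lem:1} to get the equilibrium relations, then integrate by parts on $\Omega$ and $D\setminus\overline\Omega$ separately to obtain the jump expression on $\partial\Omega$.

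In Step 3 you and the paper diverge in the order of operations. The paper \emph{first} invokes the structure theorem to replace $\VV$ by its normal part, i.e.\ $dJ(\Omega)(\VV)=dJ(\Omega)((\VV\cdot n)n)$, and only \emph{then} computes. This makes the bookkeeping transparent: using $D(gf)=gDf+f\otimes\nabla g$ one gets
\[
\Sf_1\cdot D_\Gamma\bigl((\VV\cdot n)n\bigr)=(\VV\cdot n)\,\Sf_1\cdot D_\Gamma n+\Sf_1^Tn\cdot\nabla_\Gamma(\VV\cdot n),
\]
so the term $\Sf_1\cdot D_\Gamma n$ drops out immediately from the product rule applied to $n$. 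A single tangential integration by parts on the \emph{scalar} $\VV\cdot n$ in the remaining term then produces $-\divv_\Gamma(\Sf_1^Tn)+\mathcal H(\Sf_1^Tn\cdot n)$, and one reads off \eqref{g1} directly.

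Your route---integrate by parts on $\Sf_1\cdot D_\Gamma\VV$ for general $\VV$ first, obtain $\int_{\partial\Omega}G\cdot\VV$, and only then use the Corollary to kill the tangential part of $G$---also works, but your sketch of the intermediate computation is off. Note that $D_\Gamma\VV\,n=0$ identically (from the definition $D_\Gamma\VV=D\VV-(D\VV\,n)\otimes n$), so there are no ``$\Sf_1^Tn\cdot(D_\Gamma\VV\,n)$-type pieces''; and since $n$ does not appear explicitly in $\Sf_1\cdot D_\Gamma\VV$, nothing is ``differentiating the normal'' at this stage. What actually happens along your route is that row-wise tangential integration by parts gives $\int_{\partial\Omega}(-\divv_\Gamma\Sf_1+\mathcal H\,\Sf_1 n)\cdot\VV$, and after taking $G\cdot n$ you must invoke the product-rule identity $(\divv_\Gamma\Sf_1)\cdot n=\divv_\Gamma(\Sf_1^Tn)-\Sf_1\cdot D_\Gamma n$ to match \eqref{g1}. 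So in your ordering the $D_\Gamma n$ term is hidden inside an algebraic identity rather than appearing naturally from a product rule. Both orderings are valid; the paper's is slightly cleaner because it avoids that extra identity and works with a scalar integrand throughout.
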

\bproof
Applying Theorem \ref{lem:1} we have
$$dJ(\Omega)(\VV) = \int_D \Sb_1\cdot D\VV +  \Sb_0\cdot \VV\,dx + \int_{\partial \Omega} \Sf_1\cdot D_\Gamma \VV + \Sf_0\cdot \VV\, ds =0 \quad \text{ for all }\VV\in C^1_c(\Omega\cup (D\setminus \overline \Omega), \R^d).  $$
An integration by parts shows \eqref{eq:equvilibrium_strong}.

Then, when $\partial\Omega$ is $C^1$, replacing \eqref{eq:equvilibrium_strong} in the expression of the shape derivative and using Green's formula we obtain  
\ben\label{eq:domain}
\begin{split}
	dJ(\Omega)(\VV) =& \int_{\partial \Omega} \Sf_1\cdot D_\Gamma \VV + \Sf_0\cdot \VV\,\,ds  + \int_{\partial \Omega} [(\Sb_1^+-\Sb_1^-)n]\cdot \VV \, ds\\
	&+ \int_\Omega( \underbrace{-\divv(\Sb_1^+) + \Sb_0^+}_{=0})\cdot \VV \, dx + \int_{D\setminus \overline \Omega} (\underbrace{-\divv(\Sb_1^-) + \Sb_0^-}_{=0})\cdot \VV \, dx\\
	\stackrel{\eqref{eq:equvilibrium_strong}}{=} & \int_{\partial \Omega} [(\Sb_1^+-\Sb_1^-)n]\cdot \VV +\Sf_1\cdot D_\Gamma \VV + \Sf_0\cdot \VV\, \, ds\quad \text{ for all } \VV\in C^1_c(D, \R^d).
\end{split}
\een
With a slight abuse of notation we keep the same notation $n$ for the extension of the normal to a neighborhood of $\partial \Omega$. Let $\theta\in C^1(\overline D,\R^d)$ and define
$\VV_\tau  :=\VV - (\VV\cdot n) n$ the tangential part of $\VV$. Then $\VV_\tau \cdot n =0$  on 
$\partial \Omega$ and hence  from the structure theorem we get $dJ(\Omega)(\VV_\tau)=0$ which yields in view of \eqref{eq:domain}:
\ben\label{eq:boundary_exp}
\begin{split}
	dJ(\Omega)(\VV) &=  dJ(\Omega)((\VV\cdot n)n) \\
	&= \int_{\partial \Omega} ((\Sb_1^+-\Sb_1^-)n\cdot n) (\VV\cdot n) 
	+\Sf_1\cdot D_\Gamma (n(\VV\cdot n)) + (\Sf_0\cdot n) (\VV\cdot n) \ds\\
	& =  \int_{\partial \Omega} ((\Sb_1^+-\Sb_1^-)n\cdot n) (\VV\cdot n) + (\Sf_0\cdot n) (\VV\cdot n)\ds\\
	& + \int_{\partial \Omega} \Sf_1\cdot D_\Gamma n (\VV\cdot n) + n\cdot \Sf_1\nabla_\Gamma (\VV\cdot n) \ds,
\end{split}
\een
where we used that for all functions $f\in C^1(\R^d,\R^d)$ and $g\in C^1(\R^d)$ we have
\ben
\begin{split}
	D(gf) & = g Df + f\otimes \nabla g. 
\end{split}
\een
Finally using $\Sf_1\in W^{1,1}(\partial \Omega; \mathcal L^1(\R^d,\R^d))$ we integrate by parts on the boundary $\partial \Omega$ to transform the last term in \eqref{eq:boundary_exp}
$$
\int_{\partial \Omega} n\cdot \Sf_1\nabla_\Gamma (\VV\cdot n) \ds = \int_{\partial \Omega} (- \divv_\Gamma (\Sf_1^T n) + \mathcal{H}(\Sf_1^T n\cdot n) )(\VV\cdot n) ds.
$$
Therefore \eqref{eq:boundary_exp} reads
\ben\label{eq:boundary_exp_2}
\begin{split}
	dJ(\Omega)(\VV) 
	= & \int_{\partial \Omega} ((\Sb_1^+-\Sb_1^-)n\cdot n) (\VV\cdot n) + (\Sf_0\cdot n) (\VV\cdot n)\ds\\
	& + \int_{\partial \Omega} \Sf_1\cdot D_\Gamma n (\VV\cdot n) + (- \divv_\Gamma (\Sf_1^T n) + \mathcal{H}(\Sf_1^T n\cdot n) )(\VV\cdot n)\ds,
\end{split}
\een
which we can rewrite as \eqref{eq:general_boundary_exp}.
\eproof
\begin{remark}
In Proposition \ref{tensor_relations}, if $\Sf_1\equiv 0$, one can still obtain \eqref{eq:equvilibrium_strong} when $\Omega$ is only Lipschitz instead of $C^1$. 
\end{remark}
\begin{corollary}\label{corollary2}
	Let the assumptions of Proposition~\ref{tensor_relations} be satisfied. Suppose that the tensor 
	$\Sf_1:\partial\Omega\rightarrow \mathcal L(\R^d,\R^d)$ has the form $\Sf_1 = \alpha(I -n\otimes n)$, where $\alpha\in C^0(\partial\Omega)$. Then 
	\eqref{eq:general_boundary_exp} simplifies to 
	\begin{equation}
	\label{shape_der_simplified}
	dJ(\Omega)(\VV) = \int_{\partial \Omega} g_1 \, \theta\cdot n\, \ds, 
	\end{equation}
	where $g_1$ is given by
	$$ g_1 :=  [(\Sb_1^+-\Sb_1^-)n]\cdot n + \Sf_0\cdot n + \alpha\mathcal{H}. $$
\end{corollary}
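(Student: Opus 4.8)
The plan is to invoke Proposition~\ref{tensor_relations} directly and simplify the formula \eqref{g1} under the special structure $\Sf_1=\alpha(I-n\otimes n)$. Since all the hypotheses of Proposition~\ref{tensor_relations} are assumed, we already have \eqref{eq:general_boundary_exp}, so it only remains to show that the four terms in \eqref{g1} collapse to $[(\Sb_1^+-\Sb_1^-)n]\cdot n + \Sf_0\cdot n + \alpha\mathcal H$. The two terms $[(\Sb_1^+-\Sb_1^-)n]\cdot n$ and $\Sf_0\cdot n$ are unchanged, so the work is entirely in showing that
$$\Sf_1\cdot D_\Gamma n - \divv_\Gamma(\Sf_1^T n) + \mathcal H(\Sf_1^T n\cdot n) = \alpha\mathcal H.$$

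First I would record the elementary algebraic facts about $P:=I-n\otimes n$, the tangential projection: $Pn=0$, $P^T=P$, and $P^2=P$. Hence $\Sf_1^T n = \alpha P^T n = \alpha Pn = 0$, which immediately kills the last two terms on the left-hand side: $\divv_\Gamma(\Sf_1^T n)=0$ and $\mathcal H(\Sf_1^T n\cdot n)=0$. So everything reduces to the single identity $\Sf_1\cdot D_\Gamma n = \alpha\mathcal H$.

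For that last identity I would compute $\Sf_1\cdot D_\Gamma n = \alpha\, P\cdot D_\Gamma n = \alpha\,\tr(P^T D_\Gamma n) = \alpha\,\tr(P\, D_\Gamma n)$, using the Frobenius inner product convention $A\cdot B=\tr(A^T B)$ consistent with the definition of $D_\Gamma$ in Definition~\ref{def:tensor}. Now $D_\Gamma n = Dn - (Dn\, n)\otimes n$ is already tangential on the left (its rows, when contracted with $n$ from the appropriate side, vanish, since $D_\Gamma n\, n = Dn\,n - Dn\,n\,(n\cdot n)=0$ after noting $|n|=1$), and moreover the standard fact that $Dn\,n=\nabla(\tfrac12|n|^2)=0$ on $\partial\Omega$ for the unit-normal extension means $D_\Gamma n=Dn$ on $\partial\Omega$. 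Then $\tr(P\,D_\Gamma n)=\tr(D_\Gamma n)-\tr((n\otimes n)D_\Gamma n)=\divv_\Gamma n - n\cdot(D_\Gamma n\,n)=\divv_\Gamma n - 0=\mathcal H$, using the definitions $\divv_\Gamma=\tr(D_\Gamma)$ and $\mathcal H=\divv_\Gamma n$ from Proposition~\ref{tensor_relations}. Combining, $\Sf_1\cdot D_\Gamma n=\alpha\mathcal H$, and substituting into \eqref{g1} yields the claimed expression for $g_1$, whence \eqref{shape_der_simplified} follows from \eqref{eq:general_boundary_exp}.

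The only mild obstacle is bookkeeping with the transpose/contraction conventions: one must be careful whether $\Sf_1\cdot D_\Gamma n$ means $\tr(\Sf_1^T D_\Gamma n)$ or $\tr(\Sf_1 D_\Gamma n)$, and whether $\Sf_1^T n$ uses the matrix transpose or the adjoint of the multilinear map; with $\Sf_1=\alpha P$ symmetric this ambiguity disappears, which is exactly why the corollary is clean. A secondary point is that the identities $Dn\,n=0$ and the symmetry of $Dn$ on $\partial\Omega$ require the normal field to be extended as the gradient of the signed distance function (or any unit extension), which is the standard convention already implicit in Proposition~\ref{tensor_relations}; I would state this extension explicitly to justify $D_\Gamma n=Dn$ and $\tr(D_\Gamma n)=\mathcal H$. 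No further regularity beyond $\alpha\in C^0(\partial\Omega)$ and the $C^2$ boundary already assumed in Proposition~\ref{tensor_relations} is needed, since $\Sf_1=\alpha P\in W^{1,1}(\partial\Omega)$ trivially fails in general — but here $\Sf_1^T n\equiv 0$ sidesteps the integration by parts entirely, so the $W^{1,1}$ hypothesis on $\Sf_1$ is not actually used and could in principle be dropped; I would remark on this but not belabor it.
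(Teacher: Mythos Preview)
Your proposal is correct and follows essentially the same route as the paper: show $\Sf_1^T n=0$ to kill the last two terms of \eqref{g1}, then compute $\Sf_1\cdot D_\Gamma n=\alpha(\tr(D_\Gamma n)-(D_\Gamma n\,n)\cdot n)=\alpha\mathcal H$. The one minor detour is your appeal to the signed-distance extension to get $Dn\,n=0$ and hence $D_\Gamma n=Dn$; this is unnecessary, since $D_\Gamma n\,n=Dn\,n-(Dn\,n)(n\cdot n)=0$ follows directly from the definition of $D_\Gamma$ regardless of the extension, which is exactly how the paper argues.
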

\bproof
First  $\Sf_1^T = \alpha(I -n\otimes n)^T = \Sf_1$ and $ \Sf_1^T n = \alpha(I - n\otimes n)n = \alpha(n- (n\cdot n)n) = 0$, thus the two last terms in \eqref{g1} vanish. Concerning the third term in \eqref{g1} we write
\begin{align*} 
\Sf_1\cdot D_\Gamma n = \alpha (I -n\otimes n)\cdot D_\Gamma n 
& = \alpha(\tr(D_\Gamma n) - (n\otimes n)\cdot D_\Gamma n )=\alpha(\divv_\Gamma n - (D_\Gamma n n)\cdot n)= \alpha\mathcal{H}.
\end{align*}
where we have used $(D_\Gamma n n)\cdot n=0$. 
\eproof
\begin{remark}
	The particular tensor $\Sf_1 = \alpha(I -n\otimes n)$ in Corollary \ref{corollary2} is commonly encountered in shape optimization problems. In fact, \eqref{shape_der_simplified} corresponds to a standard formula that can be found in most textbooks on shape optimization.
\end{remark}
\begin{remark}
	Recall that for given vector fields $\VV, \zeta$, the second order shape derivative is defined 
	by $$d^2J(\Omega)(\VV)(\zeta) := \dt dJ(\Phi_t^\zeta(\Omega))(\theta)|_{t=0}.$$
	Once we have identified a tensor representation \eqref{ea:volume_from} for the shape derivative $dJ(\Omega)(\theta)$ for fixed $\theta$, it is convenient to differentiate it to also obtain a tensor representation for the second-order shape derivative. Further, Proposition \ref{tensor_relations} or Corollary \ref{corollary2} can also be applied to obtain a boundary expression for the second order shape derivative.
\end{remark}

Similar relations as in Proposition \ref{tensor_relations} could be obtained for any tensor representation of order $k$. For instance in the case $k = 2$ we obtain the relations
\ben \label{eq:equvilibrium_strong_second}
\begin{split}
	\mathcal A\mathbf \Sb_2^+-\divv(\Sb_1^+) + \Sb_0^+ &= 0 \quad \text{ in } \Omega, \\
	\mathcal A\mathbf \Sb_2^- -\divv(\Sb_1^-) + \Sb_0^- &= 0 \quad \text{ in } D\setminus\overline \Omega,
\end{split}
\een
where $(\mathcal A\Sb_2)_l = \sum_{i,j=1}^d \partial_{x_ix_j}(\Sb_2)_{ijl}$. 

Using the averaged adjoint approach from Theorem  \ref{thm:sturm} yields  the tensor representation \eqref{ea:volume_from}  of the shape derivative.
	Then Proposition \ref{tensor_relations} can be used to immediately derive the standard boundary expression of the shape gradient from this  tensor representation.

\subsection{Examples of tensor representations}
In this section we present several examples of representations corresponding to Definition \ref{def:tensor} and apply the observations from  Section \ref{def_prop}.
\subsubsection*{First order tensor representation}
A basic example of a first order tensor representation of the shape derivative is for 
$$J(\Omega) = \int_\Omega f \, dx + \int_{\partial \Omega} g \, ds$$ 
with $f,g\in C^2(\R^d)$. Then one easily computes
$$ dJ(\Omega)(\VV) = \int_\Omega \nabla f \cdot \VV + f\divv(\VV)\,dx + \int_{\partial \Omega} \nabla g\cdot \VV + g\divv_\Gamma \VV\, ds . $$
The corresponding tensor representation \eqref{ea:volume_from} is
\begin{align*}
\Sb_1^+:= f  &I,\quad \Sb_1^-:= 0 , \quad \Sb_0^+:= \nabla f, \quad \Sb_0^-:= 0, \quad
\Sf_1:= g  (I-n\otimes n),\quad  \Sf_0 := \nabla g.
\end{align*}
Note that $\Sf_1$ has the form assumed in Corollary \ref{corollary2}. Applying this Corollary, assuming the domain has enough regularity, we obtain in view of \eqref{shape_der_simplified} the classical formula:
$$ dJ(\Omega)(\VV) =\int_{\partial\Omega} g_1 \, \theta\cdot n\, \ds, 
$$
where $g_1$ is given by
$$ g_1 :=  f + \dn g + g\mathcal{H}. $$
Note that in the particular case $f=0$ we have obtained as a byproduct the formula
\begin{equation}\label{tangential_green}
\int_{\partial \Omega} \nabla g\cdot \VV + g\divv_\Gamma \VV\, ds  =\int_{\partial\Omega} (\dn g + g\mathcal{H}) \, \theta\cdot n\, \ds, 
\end{equation}
and when in addition $\dn g=0$ or $g$ is defined only on $\partial\Omega$, \eqref{tangential_green} becomes the classical {\it tangential Green's formula}; see for instance \cite[proposition 5.4.9]{MR2512810}. 
\subsubsection*{Non-homogeneous Dirichlet problem}
The following problem was already considered for instance in \cite{MR2731611}. Here we present a fairly 
easy way 
to compute the shape derivative. Let $\Omega$ be an open and bounded subset of $\R^d$ 
that is contained in an open and bounded set $D$. Consider
\begin{align}\label{eq:state_non_homo}
-\Delta v &= f\mbox{ in }\Omega,\\
v & = g \mbox{ on }\partial\Omega,\label{eq:state_non_homo_}
\end{align}
where $f\in L^2(D)$ and $g\in H^2(D)$. Introducing the 
variable $u:=v-g$, we observe that \eqref{eq:state_non_homo}-\eqref{eq:state_non_homo_}
is equivalent to the homogeneous Dirichlet problem
\begin{align}\label{eq:state_non_homo-2}
-\Delta u &= \Delta g + f\mbox{ in }\Omega,\\\label{eq:state_non_homo-2_}
u & = 0 \mbox{ on }\partial\Omega.
\end{align}
Consider the cost function
\ben\label{eq:cost_dirichlet_non}
J(\Omega)=\int_\Omega |v-u_d|^2\,dx=\int_\Omega |u+g-u_d|^2\,dx.
\een
The weak formulation of \eqref{eq:state_non_homo-2},\eqref{eq:state_non_homo-2_} reads:  
\ben\label{eq:non_dir}
\mbox{Find } u\in H^1_0(\Omega):  \int_\Omega \nabla u\cdot \nabla \psi \, dx = \int_\Omega  -\nabla g\cdot \nabla \psi +  f\psi\, dx \quad \text{ for all } \psi \in H^1_0(\Omega).
\een
Note that the previous weak formulation is already well-defined for arbitrary open and bounded 
set $\Omega$. We do not need to impose any regularity on $\Omega$.
The perturbed problem of the previous equation, which is obtained by considering \eqref{eq:non_dir} on $\Phi_t(\Omega)$ and
performing a change of variables, reads: find $u^t\in H^1_0(\Omega)$ such that 
\ben\label{eq:non_dir_per}
\int_\Omega A(t)\nabla u^t\cdot \nabla \psi \, dx = \int_\Omega - A(t)\nabla g^t\cdot \nabla \psi +  \xi(t)f^t \psi\, dx \quad \text{ for all } \psi \in H^1_0(\Omega),
\een
where $\xi(t) := \det(D\Phi_t)$ and $A(t) :=\xi(t)D\Phi_t^{-1}D\Phi_t^{-T}$. The following continuity result is standard:
\begin{lemma}
	There exists a constant $c>0$ such that $ \|u^t-u^0\|_{H^1_0(\Omega)} \le ct$ for all $ t\in [0,\tau]. $
\end{lemma}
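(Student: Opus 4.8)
The plan is to derive the estimate by subtracting the perturbed weak formulation \eqref{eq:non_dir_per} at time $t$ from the one at time $0$ (where $A(0)=I$, $\xi(0)=1$, $g^0=g$, $f^0=f$, $u^0=u$), testing with $\psi = u^t-u^0 \in H^1_0(\Omega)$, and exploiting the uniform coercivity of the bilinear form $\psi\mapsto\int_\Omega A(t)\nabla\psi\cdot\nabla\psi\,dx$ on a short time interval. First I would recall the standard regularity of the flow: since $\VV\in C^{0,1}_c(D,\R^d)$, the maps $t\mapsto D\Phi_t$, $t\mapsto D\Phi_t^{-1}$ and $t\mapsto \xi(t)$ are Lipschitz in $t$ with values in $L^\infty(\Omega)$, uniformly on $[0,\tau]$, and $\xi(t)\to 1$ uniformly as $t\searrow 0$; hence there is $\tau_0\in(0,\tau]$ and $\alpha>0$ with $A(t)\xi\cdot\xi \ge \alpha|\xi|^2$ for all $t\in[0,\tau_0]$, together with bounds $\|A(t)-I\|_\infty \le ct$, $\|\xi(t)-1\|_\infty\le ct$, and a uniform bound $\|\nabla g^t-\nabla g\|_{L^2}\le ct$ (using $g\in H^2(D)$ and the change of variables $g^t = g\circ\Phi_t$, whose gradient is $D\Phi_t^T(\nabla g\circ\Phi_t)$).

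The key step is the algebraic manipulation of the difference. Writing $e := u^t-u^0$, subtracting the two identities gives, for all $\psi\in H^1_0(\Omega)$,
\[
\int_\Omega A(t)\nabla e\cdot\nabla\psi\,dx = \int_\Omega (I-A(t))\nabla u^0\cdot\nabla\psi\,dx + \int_\Omega \big(A(0)\nabla g - A(t)\nabla g^t\big)\cdot\nabla\psi\,dx + \int_\Omega \big(\xi(t)f^t - f\big)\psi\,dx.
\]
Taking $\psi = e$, the left side is bounded below by $\alpha\|\nabla e\|_{L^2}^2$ by coercivity. Each term on the right is estimated by Cauchy--Schwarz: the first by $\|I-A(t)\|_\infty\|\nabla u^0\|_{L^2}\|\nabla e\|_{L^2}\le ct\|\nabla e\|_{L^2}$ using the a priori bound $\|\nabla u^0\|_{L^2}\le C$ from \eqref{eq:non_dir}; the second by splitting $A(0)\nabla g-A(t)\nabla g^t = (I-A(t))\nabla g + A(t)(\nabla g-\nabla g^t)$ and using $\|I-A(t)\|_\infty\le ct$, $\|A(t)\|_\infty\le C$, $\|\nabla g - \nabla g^t\|_{L^2}\le ct$; the third by the Poincar\'e inequality together with $\|\xi(t)f^t - f\|_{L^2}\le \|\xi(t)-1\|_\infty\|f^t\|_{L^2} + \|f^t-f\|_{L^2}\le ct$ (the latter following from strong continuity of $t\mapsto f\circ\Phi_t$ in $L^2$; when $f$ is merely $L^2$ one uses that $f^t\to f$ in $L^2$ at rate $O(t)$ via the Lipschitz flow, or absorbs this into the constant for a slightly weaker but still $O(t)$ estimate). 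Collecting, $\alpha\|\nabla e\|_{L^2}^2 \le ct\|\nabla e\|_{L^2}$, and dividing by $\|\nabla e\|_{L^2}$ (trivial if it is zero) yields $\|\nabla e\|_{L^2}\le ct$, which is the claim by equivalence of $\|\cdot\|_{H^1_0}$ and $\|\nabla\cdot\|_{L^2}$ on $H^1_0(\Omega)$; on $[\tau_0,\tau]$ the bound holds with an enlarged constant since each $\|u^t-u^0\|_{H^1_0}$ is finite.

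The main obstacle, and the only point requiring care, is the estimate $\|f^t - f\|_{L^2(\Omega)} \le ct$ when $f$ is only assumed to lie in $L^2(D)$: unlike for $g\in H^2$, one cannot invoke a derivative of $f$. The remedy is standard: the map $t\mapsto f\circ\Phi_t$ is continuous from $[0,\tau]$ into $L^2(\Omega)$, and in fact for the $O(t)$ rate it suffices to observe that $\int_\Omega(\xi(t)f^t-f)\psi\,dx = \int_{\Omega_t} f\,\psi\circ\Phi_t^{-1}\,dx - \int_\Omega f\psi\,dx$ and bound this using that $\psi\circ\Phi_t^{-1}-\psi$ is $O(t)$ in a suitable norm together with the symmetric difference $\Omega_t\triangle\Omega$ having measure $O(t)$; alternatively one absorbs the non-Lipschitz part into a modulus of continuity $\omega(t)$, in which case the lemma holds with $ct$ replaced by $c\,\omega(t)\to 0$, which is all that is needed for the subsequent shape-differentiability argument. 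All other ingredients are the routine uniform bounds on the flow-induced coefficients, so the proof is short once the coercivity threshold $\tau_0$ is fixed.
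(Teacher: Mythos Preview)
The paper does not actually prove this lemma; it merely states that ``the following continuity result is standard'' and moves on. Your argument is precisely the standard one that the authors have in mind: subtract the perturbed and unperturbed variational identities, test with $e=u^t-u^0$, invoke uniform coercivity of $A(t)$ for small $t$, and bound each right-hand side term by $ct\,\|e\|_{H^1_0}$.

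The only place where your write-up wavers is the $f$ term, and there you are in fact correct but could be sharper. For $f\in L^2(D)$ alone one cannot bound $\|f^t-f\|_{L^2}$ by $ct$, so the naive splitting $(\xi(t)-1)f^t+(f^t-f)$ does not close. Your change-of-variables remedy is the right fix, and it works cleanly once you extend $e\in H^1_0(\Omega)$ by zero to all of $\R^d$: then
\[
\int_\Omega(\xi(t)f^t-f)\,e\,dx=\int_{\R^d}f\,(e\circ\Phi_t^{-1}-e)\,dx,
\]
because $e\circ\Phi_t^{-1}\in H^1_0(\Phi_t(\Omega))$ is also extended by zero, so no symmetric-difference term appears. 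The estimate $\|e\circ\Phi_t^{-1}-e\|_{L^2(\R^d)}\le ct\,\|\nabla e\|_{L^2(\R^d)}$ is standard for $H^1$ functions under a Lipschitz flow, and combined with $\|f\|_{L^2(D)}$ this gives the required $ct\,\|e\|_{H^1_0}$ bound. With that clarification your proof is complete and delivers exactly the Lipschitz rate claimed; the fallback to a mere modulus of continuity $\omega(t)$ is unnecessary.
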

Introduce 
\begin{align*}
a(t,\varphi,\psi) & :=  \int_\Omega A(t)\nabla \varphi\cdot \nabla \psi \, dx + \int_\Omega A(t)\nabla g^t\cdot \nabla \psi  - \xi(t)f^t \psi\, dx  \\
b(t,\varphi) &  :=\int_\Omega \xi(t) |\varphi +g^t-u_d^t|^2\,dx.
\end{align*}
where $g^t:=g\circ \Phi_t$ and $u_d^t:=u_d\circ \Phi_t$.

Recall that the associated Lagrangian \eqref{G_lag} is
$G(t,\varphi,\psi) = a(t,\varphi,\psi) + b(t,\varphi).
$
The averaged adjoint equation \eqref{averated_} reads 
$$\int_\Omega A(t)\nabla \varphi\cdot \nabla p^t \, dx = \int_\Omega (u^t+u^0 + 2g^t - 2u_d^t)\varphi\,dx \quad \text{ for all } \varphi\in H^1_0(\Omega). $$

The following continuity result for the adjoint is standard:
\begin{lemma}
	There exists a constant $c>0$ such that 
	$$ \|p^t-p^0\|_{H^1(\Omega)} \le ct \quad \text{ for all } t\in [0,\tau].$$
\end{lemma}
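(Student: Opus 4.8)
The plan is to imitate the proof of the preceding continuity lemma for $u^t$: subtract the averaged adjoint equation at time $t$ from the one at $t=0$, test the difference with $p^t-p^0\in H^1_0(\Omega)$, and close the estimate by uniform coercivity together with Lipschitz-in-$t$ bounds on the data. First I would record the averaged adjoint equation at $t=0$: since $A(0)=I$, $\xi(0)=1$, $g^0=g$ and $u_d^0=u_d$, it reads $\int_\Omega \nabla\varphi\cdot\nabla p^0\,dx=\int_\Omega(2u^0+2g-2u_d)\varphi\,dx$ for all $\varphi\in H^1_0(\Omega)$, where $u^0$ solves \eqref{eq:non_dir}. Subtracting this from the equation defining $p^t$ and using that the matrix $A(t)$ is symmetric, I obtain for every $\varphi\in H^1_0(\Omega)$
\begin{equation*}
\int_\Omega A(t)\nabla(p^t-p^0)\cdot\nabla\varphi\,dx = \int_\Omega\big[(u^t-u^0)+2(g^t-g)-2(u_d^t-u_d)\big]\varphi\,dx - \int_\Omega(A(t)-I)\nabla p^0\cdot\nabla\varphi\,dx ,
\end{equation*}
and then I would take $\varphi=p^t-p^0$.

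The estimate then rests on a few standard facts. On the left-hand side I use that $A(t)\to I$ uniformly on $\overline D$ as $t\searrow 0$, so there is $\alpha>0$ independent of $t$ with $A(t)\zeta\cdot\zeta\ge\alpha|\zeta|^2$ for all $t\in[0,\tau]$, $\zeta\in\R^d$ (for $\tau$ small enough, the same requirement under which the lemma for $u^t$ holds); hence the left-hand side dominates $\alpha\|\nabla(p^t-p^0)\|^2_{L^2(\Omega)}$. On the right-hand side I bound each term linearly in $t$: $\|u^t-u^0\|_{H^1_0(\Omega)}\le ct$ by the previous lemma; $\|A(t)-I\|_{L^\infty(D)}\le ct$ from the $C^1$-dependence of the flow $\Phi_t$ on $t$; and $\|g^t-g\|_{L^2(\Omega)}\le ct$, $\|u_d^t-u_d\|_{L^2(\Omega)}\le ct$, which follow—using $g\in H^2(D)$ and $u_d\in H^1(D)$—by writing $h\circ\Phi_t-h=\int_0^t(\nabla h\circ\Phi_s)\cdot(\VV\circ\Phi_s)\,ds$, applying Cauchy--Schwarz in $s$, and changing variables $y=\Phi_s(x)$ with uniformly bounded Jacobian. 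Combining these bounds with the Cauchy--Schwarz inequality and the Poincaré inequality in $H^1_0(\Omega)$ (applicable since $p^t-p^0\in H^1_0(\Omega)$ and $\nabla p^0\in L^2(\Omega)$), the right-hand side is controlled by $ct\,\|\nabla(p^t-p^0)\|_{L^2(\Omega)}$. Dividing by $\|\nabla(p^t-p^0)\|_{L^2(\Omega)}$ and invoking Poincaré once more yields $\|p^t-p^0\|_{H^1(\Omega)}\le ct$.

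I do not expect a genuine obstacle here; this is a routine energy estimate for a linear elliptic problem with perturbed coefficients, entirely parallel to the continuity lemma for $u^t$ and to the classical material-derivative estimates in \cite{MR2731611}. The only points requiring a little care are the uniform-in-$t$ coercivity of $a(t,\cdot,\cdot)$ (which is the reason $\tau$ must be taken small) and the observation that the $O(t)$ rate for $\|g^t-g\|_{L^2}$ and $\|u_d^t-u_d\|_{L^2}$ needs $g,u_d\in H^1(D)$—with merely $L^2$ data one would only get $\|p^t-p^0\|_{H^1(\Omega)}\to0$ without a rate.
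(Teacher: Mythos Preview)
The paper does not actually give a proof of this lemma; it merely asserts that ``the following continuity result for the adjoint is standard'' and moves on. Your argument is precisely the standard energy estimate one has in mind here---subtracting the averaged adjoint equations at $t$ and $0$, testing with $p^t-p^0$, and using uniform coercivity of $A(t)$ together with the $O(t)$ bounds on $A(t)-I$, $u^t-u^0$, $g^t-g$, and $u_d^t-u_d$---and it is correct. Two minor remarks: the paper's displayed averaged adjoint equation omits the Jacobian factor $\xi(t)$ that should appear from the definition of $b(t,\varphi)$, so if you include it you will also need the (equally standard) bound $\|\xi(t)-1\|_{L^\infty}\le ct$; and the paper does not explicitly state the regularity of $u_d$ in this subsection, so your assumption $u_d\in H^1(D)$ (or $C^1$, as assumed in the subsequent example) is the natural one to make explicit.
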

One readily verifies that all assumptions of Theorem~\ref{thm:sturm} are satisfied, except for (H1) which we now prove.
	Indeed using $p^t\rightarrow p^0$ in $H^1_0(\Omega)$ as $t$ goes  to zero and the strong differentiability of $t\mapsto A(t)$ and $t\mapsto \xi(t)$, we get 
	\begin{align*}
	&\lim_{t\searrow 0 }  \frac{ G(t,u^0,p^t) - G(0,u^0,p^t)}{t}   \\
	&\hspace{1cm} 
	=\lim_{t\searrow 0} \bigg(\int_{\Omega} \left(\frac{ A(t) - I }{t} \right) \nabla u^0 \cdot \nabla p^t \, dx + \int_{\Omega} \left(\frac{ A(t)\nabla g^t - \nabla g }{t}\right) \cdot \nabla p^t {\color{red}\, dx} - 
	\left(\frac{\xi(t)f^t - f}{t} \right) p^t \, dx \\
	& \hspace{1cm} + \int_{\Omega} \frac{ \xi(t)| u^0 + g^t - u_d^t |^2  - | u^0 + g - u_d  |^2}{t}\bigg)\\
	&\hspace{1cm} = \partial_t G(0,u^0,p^0),
	\end{align*}
	which shows that (H1) is satisfied.

Hence, applying Theorem~\ref{thm:sturm} yields
$$ dJ(\Omega)(\theta) = \partial_t a(0,u,p) + \partial_tb(0,u) ,$$
which is by definition equivalent to 
\ben
\begin{split}
	dJ(\Omega)(\theta)    = &\int_\Omega A'(0)(\nabla u \cdot \nabla p + \nabla g\cdot \nabla p) \, dx + \int_\Omega \nabla(\nabla g\cdot \theta)\cdot \nabla p -  \divv(f\theta) p \, dx\\
	& + \int_\Omega \divv(\theta) |u+g-u_d|^2 + (\nabla (g-u_d)\cdot \theta))(u+g-u_d)\,dx.
\end{split}
\een
Since $A'(0) = (\di \VV) I - D\VV^T - D\VV$ we obtain the tensor representation \eqref{eq:first_order_tensor} with:
\begin{align*} 
\Sb_1 &= I(\nabla u\cdot \nabla p  + \nabla g \cdot \nabla p - fp + |u+g-u_d|^2)  -\nabla u\otimes \nabla p - \nabla p \otimes \nabla u - \nabla p \otimes \nabla g, \\
\Sb_0 &= D^2 g\nabla p - p\nabla f  
+(u+g-u_d)  \nabla (g-u_d),   \\
\Sf_1 &= 0, \quad \Sf_0 = 0. 
\end{align*}
Now applying  \eqref{shape_der_simplified} we get immediately
\begin{align*}
g_1 = \nabla u\cdot \nabla p +\nabla  g \cdot \nabla p - fp + |u+g-u_d|^2  -2\partial_n u\partial_n p - \partial_n p \partial_n g.
\end{align*}
Using the definition of the tangential gradient and $p=0,u=0$ on $\Gamma$ implies $\nabla_\Gamma u=\nabla_\Gamma p=0$, so we obtain the simpler expression 
\begin{align*}
g_1 = -\partial_n u\partial_n p  + |u+g-u_d|^2 = -\partial_n (v-g)\partial_n p  + |v-u_d|^2 .
\end{align*}
Finally, substituting back $u=v-g$ we obtain the formula
$$dJ(\Omega)(\VV) = \int_{\partial \Omega} (-\partial_n (v-g)\partial_n p  + |v-u_d|^2)\, \theta\cdot n\ds  .$$
This formula can be found for instance in \cite[p. 566, Formula 6.38]{MR2731611}, where
the adjoint has the sign opposite to our case.
\subsubsection*{Elliptic problem: first order tensor representation}
Suppose that $\Omega \subset D\subset \R^d$ is a smooth bounded domain, where $D\subset \R^d$ is the smooth ``universe''. 
Let us  consider the Dirichlet problem:
\begin{align}\label{eq:state}
\begin{split}
-\divv(M\nabla u) + u&= f\mbox{ in }\Omega,\\
u & = 0 \mbox{ on }\partial\Omega,
\end{split}
\end{align}
where $M\in \R^{d,d}$ is a positive definite matrix.
Consider the cost function
\ben\label{eq:cost_dirichlet}
J(\Omega)=\int_\Omega |u-u_d|^2\,dx,
\een
where $u_d\in C^1(\R^d)$. Let us introduce 
\begin{align*}
a(t,\varphi,\psi) & :=  \int_\Omega (M Q^t\nabla \varphi \cdot Q^t\nabla \psi   + \varphi \psi )\xi(t) \, dx - \int_\Omega \xi(t) f^t\psi \, dx \\
b(t,\varphi) &  :=\int_\Omega \xi(t) |\varphi-u_d^t|^2\,dx,
\end{align*}
where $Q^t:=D\Phi_t^{-T}$ and $\xi(t) := \det(D\Phi_t)$.
Then the weak formulation of \eqref{eq:state} on the perturbed domain $\Omega_t$, once transported back to $\Omega$ is
\benn
a(t,u^t,\psi) = 0 \quad \text{ for all } \psi\in H^1_0(\Omega).
\eenn
The Lagrangian corresponding to the minimization of $J(\Omega)$ and the PDE constraint \eqref{eq:state} is 
\ben\label{eq:lagrange}
G(t,\varphi,\psi) = b(t,\varphi) + a(t,\varphi,\psi). 
\een
 It can be shown using Theorem \ref{thm:sturm}  that $dJ(\Omega)(\VV) =  \partial_t G(0,u,p)$, where $p\in H^1_0(\Omega)$ denotes the adjoint state:
\ben\label{eq:adjoint_state_simple_dirichlet}
\begin{split}
	\int_\Omega M\nabla \psi\cdot \nabla p +p\psi\, dx=-\int_\Omega 2 (u-u_d)\psi\,  dx\quad \text{ for all } \psi \in H^1_0(\Omega) .
\end{split}
\een
\\
The tensor representation \eqref{ea:volume_from} of the shape derivative of $J(\Omega)$ in direction $\VV\in C^2_c(D,\R^d)$ is  given by
\ben\label{eq:shape_derivative}
dJ(\Omega)(\VV) = \int_\Omega \mathbf S_1\cdot  D\VV +  \Sb_0\cdot \VV \, dx.
\een
where we use the relation $(\nabla p \otimes \nabla u)\cdot D\VV = D\VV \nabla u\cdot \nabla p$ to get the tensors 
\begin{align}
\label{f_mathbf}
\Sb_0 & = -2(u-u_d)\nabla u_d - p\nabla f ,\\
\Sb_1 & = - \nabla p \otimes M \nabla u -  \nabla u \otimes M^T\nabla p + (M\nabla u\cdot \nabla p+ up-fp+(u-u_d)^2)I.  
\end{align}
In the simple case where $M=I$, assuming $u,p\in C^2(\overline \Omega)$, we know from the previous discussion that \eqref{eq:equvilibrium_strong} 
 is satisfied. 
Noting that 
\begin{align*}
\divv(\nabla p\otimes \nabla u) &= \Delta u \nabla p + (D^2p)^T\nabla u, \\
\divv(\nabla u\otimes \nabla p) &= \Delta p \nabla u + (D^2u)^T\nabla p, \\ 
\nabla (\nabla u\cdot \nabla p) &= D^2u\nabla p +  D^2 p \nabla u,
\end{align*}
the relation
\ben\label{nes}
- \divv(\Sb_1) + \Sb_0 = 0 \quad \text{ in } \Omega
\een
is equivalent to
$$ (-\Delta u + u -f)\nabla p + (-\Delta p + p + 2(u-u_d))\nabla u = 0  \quad \text{ in } \Omega.$$ 
Therefore, we observe that the fundamental relation \eqref{eq:equvilibrium_strong} between the tensors $\Sb_1$ and $\Sb_0$  corresponds to the  strong solvability of the state and adjoint state equation.
\section{Descent directions}\label{section3}

In this paper we are interested in numerical methods for shape optimization problems of the type
\ben
\min_{\Omega \in \mathfrak{P}}J(\Omega),
\een
where $\mathfrak{P}\subset\mathcal{P}(D)$ is the admissible set. Assume  $J:\mathfrak{P}\rightarrow \R$ is shape differentiable at $\Omega\subset D\subset \R^d$.
\begin{definition}[descent direction]\label{def:descent}
	The vector field $\VV\in C^{0,1}_c(D,\R^d)$ is called a {\it descent direction} for $J$ at $\Omega$ if there exists an $\ve$ such that
	$$ J(\Phi_t^\theta(\Omega))< J(\Omega)\mbox{ for all } t\in (0,\ve).$$
	If the Eulerian semiderivative of $J$  at $\Omega$ in direction $\VV$ exists and if it is a descent direction then by definition
	\ben\label{eq:descent}
	dJ(\Omega)(\VV) < 0.
	\een
\end{definition}
Descent directions are used in iterative methods for finding approximate (possibly local) minimizers of  $J(\Omega)$. Typically, at a given starting point $\Omega$, one determines a descent direction $\VV$ and proceeds along this direction as long as the cost functional $J$ reduces sufficiently using a step size strategy.
In this section we give a general setting for computing descent directions in the framework of gradient methods using the domain and boundary representations of the shape derivative according to Theorem \ref{thm:structure_theorem}. We show how a descent direction $\VV$ with any regularity $H^s$, $s\geq 1$ can be obtained by solving an appropriate partial differential equation. 
We also show how to deal with bound constraints on $\VV$. 
In order to develop a setting allowing to define general descent directions, we recall sufficient conditions for the solvability of the following operator equation
\benn
A\VV=f,
\eenn
where $A:\Eb\rightarrow \Eb'$ is an operator between a Banach space $\Eb$ and its dual $\Eb'$.  Sufficient conditions
for the bijectivity of $A$  are given by the theorem of Minty-Browder \cite[p.364, Theorem 10.49]{MR2028503}.
\begin{theorem}[Minty-Browder]\label{MB}
	Let $(\Eb;\|\cdot\|_{\Eb})$ be a reflexive separable Banach space and $A:\Eb\rightarrow \Eb'$ a bounded, hemi-continuous, monotone and coercive operator. Then $A$ is surjective, i.e. for each $f\in \Eb'$ there exists $\VV\in \Eb$ such that $A\VV=f$.
	Moreover if $A$ is strictly monotone then it is bijective.
\end{theorem}

Let $A:\Eb\rightarrow \Eb'$ be an operator on a reflexive, separable Banach space $\Eb$ satisfying the assumptions of  Theorem \ref{MB} with $A(0)\VV\ge 0$ for $v\in \Eb$. 
Assume $dJ(\Omega)$ can be extended to $\Eb'$ if necessary; for simplicity we keep the same notation for the extension. Introduce the bilinear form

	\ben\label{eq:bilinear}
	\bil:\Eb\times \Eb\rightarrow \R,\qquad \bil(\VV,\zeta):=\langle A\VV,\zeta \rangle_{\Eb',\Eb}.
	\een

Consider the variational problem:
\ben
\label{VP_1}
(\text{VP})\qquad
\mbox{Find  } \VV_1\in \Eb\mbox{  such that  }  \bil(\VV_1,\zeta)=- dJ(\Omega)(\zeta) \mbox{  for all  } \zeta\in \Eb ,
\een
Then the solution $\VV_1$ of (VP) is a descent direction since $dJ(\Omega)(\VV_1) = -\bil(\VV_1,\VV_1)\leq 0$.

In certain situations it is desirable to have bound constraints on the shape perturbation. This may be handled by considering the more general case of a variational inequality. Given a subset $K\subset \Eb$ with $0\in K$, consider the variational inequality:
\benn
(\text{VI})\qquad
\mbox{Find  }\VV_2\in K\mbox{  such that  } \bil(\VV_2,\VV_2-\zeta) \le  dJ(\Omega)(\zeta-\VV_2) \mbox{  for all  } \zeta\in K .
\eenn
The solution $\VV_2$ of  (VI) yields a descent direction for $J$ at $\Omega$ since taking $\zeta = 0\in K$ we get 
$$dJ(\Omega)(\VV_2) \le -\bil(\VV_2,\VV_2)\leq 0.$$ In view of Theorem \ref{thm:structure_theorem}, we choose $\Eb\subset H^{s}(D)$ where
$s$ is such that $dJ(\Omega):H^s(D,\R^d)\rightarrow \R^d$ is continuous. When $\Eb$ is a Hilbert space, one may identify $\Eb'$ with $\Eb$. Therefore if $\bil$ is bilinear, coercive, and continuous, then Lax Milgram's lemma ensures that (VP) has a unique solution. For all other cases we may have to use Theorem \ref{MB} or similar results.

\begin{remark}
		\begin{itemize}
			\item[(a)]
			Let $\Eb:= H^1_0(D,\R^d)$, 
			$\bil(\VV, \zeta) := \int_D D\VV:D\zeta \,dx $ and $\Omega^+\Subset D$. Then 
			\eqref{VP_1} reads: find $\theta \in H^1_0(D,\R^d)$, such that
			$ \mathcal B(\theta, \zeta) = - dJ(\Omega^+)(\zeta)$ for all $\zeta\in H^1_0(D,\R^d)$. Under the assumption that $\partial \Omega^+ \in C^2$, $\theta|_{\Omega^+} \in H^2(\Omega^+)$,
			and $\theta|_{D\setminus \overline{\Omega^+}} \in H^2(D\setminus \overline{\Omega^+})$, Proposition \ref{tensor_relations} yields
			$$ \int_{\partial \Omega^+} g \; \zeta\cdot n \, ds = dJ(\Omega^+)(\zeta) \quad \text{ for all }\zeta\in H^1_0(D,\R^d), \quad \text{ where } \quad g= -(D\theta^+ n -D\theta^- n)\cdot n. $$ 
			This shows that the  restriction to $\partial \Omega^+$  of the obtained descent direction $\theta$ is 
			more regular than the function $g$.	
			\item[(b)]  Let $n$ be an extension of the unit normal to $\Omega^+$ in  $D$. If $\theta$ defined on $D$ is a descent direction then $(\theta\cdot n)n|_{\partial\Omega^+}$ is also a descent direction, for the tangential part of $\theta$ does not influence the derivative. Indeed define  $\theta_{\tau}:= \theta - (\theta\cdot n)n$, then by Nagumo's theorem $dJ(\Omega^+)(\theta_{\tau}) = 0$ and thus $dJ(\Omega^+)(\theta) = dJ(\Omega^+)((\theta\cdot n)n)$.
			However, $\theta$ and $(\theta\cdot n)n$ lead  to different transformations of the domains in general, indeed the tangential term actually has an influence for large deformations,  which means $\Phi_t^\theta(\Omega^+)  \ne \Phi_t^{(\theta \cdot n)n}(\Omega^+)$. This influence appears for instance in the shape Hessian.
		\end{itemize}
	\end{remark}

 	\section{Electrical impedance tomography}\label{section4}
	We consider an application of the results above to a typical and important interface problem: the inverse problem of electrical impedance tomography (EIT) also known as the inverse conductivity or Calder\'on's problem \cite{MR590275} in the mathematical literature. It is an active field of research with an extensive literature; for further details we point the reader toward the survey papers \cite{MR1955896,MR1669729} as well as \cite{MR2986262} and the references therein. We consider the particular case where the objective is to reconstruct a piecewise constant  conductivity $\sigma$ which amounts to determine an interface ${\Gamma^+}$ between some inclusions and the background. We refer the reader to \cite{MR2329288, MR2132313, MR2536481,HintLauNov,MR2886743, 0266-5611-31-7-075009,Canelas2013} for more details on this approach. 
	
	The main interest of studying EIT is to apply the approach developed in this paper to a problem which epitomizes general interface problems and simultaneously covers the entire spectrum of difficulties encountered with severely ill-posed inverse problem.
	\subsection{Problem statement}\label{sec:eit}
	Let $D\subset \R^d$ be a Lipschitz domain, and $\Omega^+,\Omega^-\subset D$ open sets such that $D=\Omega^+\cup \Omega^-\cup \Gamma^+$, where ${\Gamma^+}=\partial \Omega^+=\overline{\Omega^+}\cap \overline{\Omega^-}$ and $\Gamma = \partial D=\partial\Omega^-\setminus\Gamma^+$; see Figure \ref{partition}. In this section $n$ denotes either the outward unit normal vector to $D$ or the outward unit normal vector to $\Omega^+$. Decompose $\Gamma$ as $\Gamma = \Gamma_d\cup\Gamma_n$.  Let $\sigma=\sigma^+\chi_{\Omega^+}+\sigma^-\chi_{\Omega^-}$ where $\sigma^\pm$ are scalars and $f=f^+\chi_{\Omega^+}+f^-\chi_{\Omega^-}$ where $f^\pm\in H^1(D)$.
	\begin{figure}
		\begin{center}
			\includegraphics[width=0.3\textwidth]{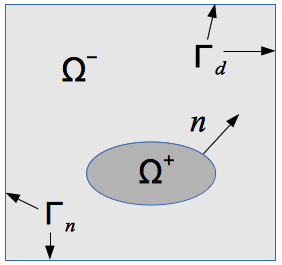}
			\caption{Partition $D=\Omega^+\cup \Omega^-\cup \Gamma$. }\label{partition}
		\end{center}
	\end{figure}
	Consider the following problems: find $\une\in H^1_d (D)$ such that
	\begin{equation}\label{varun}
	\int_D \sigma \nabla \une \cdot \nabla z = \int_D fz + \int_{\Gamma_n} gz\ \mbox{ for all }\ z\in H^1_d (D) 
	\end{equation}
	and find $\udi\in H^1_{dn}(D)$ such that 
	\begin{equation}\label{varud}
	\int_D \sigma \nabla \udi \cdot \nabla z = \int_D fz\  \mbox{ for all }\ z\in H^1_0(D) 
	\end{equation}
	where
	\begin{align*}
	H^1_d (D)&:=\{v\in H^1(D)\ |\ \; v=0\mbox{ on }\Gamma_d\}, \\
	H^1_{dn}(D)&:=\{v\in H^1(D)\ |\ \; v=0\mbox{ on }\Gamma_d, v=h\mbox{ on }\Gamma_n\},\\ 
	H^1_{0}(D)&:=\{v\in H^1(D)\ |\ \; v=0\mbox{ on }\Gamma\}
	\end{align*}
	and $g\in H^{-1/2}(\Gamma_n)$ represents the input, in this case the electric current applied on the boundary and $h\in H^{1/2}(\Gamma_n)$ is the measurement of the potential on $\Gamma_n$, or the other way around, i.e. $h$ can be the input and $g$ the measurement.  Define also the space
	$$PH^k(D) := \{ u = u^+\chi_{\Omega^+} + u^-\chi_{\Omega^-}|\ u^+\in H^k(\Omega^+),\ u^-\in H^k(\Omega^-) \}. $$
	Consider the following assumption which will be used only to derive the boundary expression of the shape derivative but is not required for the domain expression:
	\begin{assumption}\label{assump1}
		The domains $D, \Omega^+,\Omega^-$ are of class $C^k$, $f\in PH^{\max(k-2,1)}(D)$, $g\in H^{k-\frac{3}{2}}(D)$ and  $h\in H^{k-\frac{1}{2}}(D)$ for $k\geq 2$. 
	\end{assumption}
	Applying Green's formula under Assumption \ref{assump1},  equations \eqref{varun} and \eqref{varud}  are equivalent to the following transmission problems where $\une = \une^+\chi_{\Omega^+} + \une^-\chi_{\Omega^-}$ and $\udi = \udi^+\chi_{\Omega^+} + \udi^-\chi_{\Omega^-}$:
	\begin{align}
	\label{eit1.1_}-\sigma^+\Delta \une^+ & = f \mbox{ in }\Omega^+, \quad  -\sigma^-\Delta \une^- = f \mbox{ in }\Omega^-, \\
	\label{eit1.2_}  \une^- & = 0 \mbox{ on }\Gamma_d ,\\
	\label{eit1.3_}\sigma^- \dn \une^- & = g \mbox{ on }\Gamma_n ,
	\end{align}
	\begin{align}
	\label{eit2.1_}-\sigma^+\Delta \udi^+ & = f \mbox{ in }\Omega^+,\quad  -\sigma^-\Delta \udi^- = f \mbox{ in }\Omega^-,\\
	\label{eit2.2_}\udi^- & = 0 \mbox{ on }\Gamma_d, \\
	\label{eit2.3_}\udi^- & = h \mbox{ on }\Gamma_n, 
	\end{align}
	with the transmission conditions
	\ben
	\begin{split}
		\sigma^+\partial_n\une^+=\sigma^-\partial_n\une^-, &  \qquad \sigma^+\partial_n\udi^+=\sigma^-\partial_n\udi^- \quad \text{ on } {\Gamma^+},\\ 
		\une^+=\une^-,     &\qquad  \udi^+=\udi^-\quad \text{ on } {\Gamma^+}. \end{split}
	\een
	On $\Gamma_d$ we impose homogeneous Dirichlet conditions, meaning that the voltage is fixed and no measurement is performed. One may take $\Gamma_d =\emptyset$, in which case   \eqref{varun} becomes a pure Neumann problem and  additional care must be taken for the uniqueness and existence of a solution. The situation $\Gamma_d \neq \emptyset$ corresponds to partial measurements. Alternatively, it is also possible to consider a slightly different problem where each function $\une$ and $\udi$ has both the boundary conditions \eqref{eit1.3_} and \eqref{eit2.3_} on different parts of the boundary. 
	
	Several measurements can be made by choosing sets of functions $\{g_i\}_{i=1}^I$ and $\{h_i\}_{i=1}^I$. 
	Writing $u_{n,i}$ and $u_{d,i}$ for the corresponding states, the problem of electrical impedance tomography is: 
		\begin{equation}\label{EIT_pb}
		\mbox{(EIT): Given $\{g_i\}_{i=1}^I$ and $\{h_i\}_{i=1}^I$,  find $\sigma$ such that $u_{n,i} = u_{d,i}$ in $D$ for $i=1,..,I$.}
		\end{equation}
		Note that $u_{n,i} = u_{n,i}(\Omega^+)$ and $u_{d,i} = u_{d,i}(\Omega^+)$ actually depend on $\Omega^+$ through $\sigma=\sigma(\Omega^+)$, however we  often write $u_{n,i}$ and $u_{d,i}$ for simplicity. In this section, we assume that the conductivities $(\sigma^+,\sigma^-)$ are known, therefore the EIT problem \eqref{EIT_pb} reduces to the following shape optimization problem where $\Omega^+$ is the unknown. 
		\begin{align}\label{EIT-SO}
		\begin{split}
		\mbox{Given }& \mbox{$\{g_i\}_{i=1}^I$, $\{h_i\}_{i=1}^I$ and $(\sigma^+,\sigma^-)$ with $\sigma=\sigma^+\chi_{\Omega^+}+\sigma^-\chi_{\Omega^-}$,} \\
		& \mbox{find $\Omega^+$ such that $u_{n,i} = u_{d,i}$ in $D$ for $i=1,..,I$.}
		\end{split}
		\end{align}
		Actually, the result for several measurements can be straightforwardly deduced from the case of one measurement by summing  the cost functionals corresponding to each measurement, therefore in this section we stick to the  case $I=1$ of one measurement $g$ for simplicity of presentation. In section \ref{section6} we consider several measurements for the numerics.

	The notion of well-posedness due to Hadamard requires the existence and uniqueness of a solution and the continuity of the inverse mapping. The severe ill-posedness of EIT is well-known: uniqueness and continuity of the inverse mapping depend on the regularity of $\sigma$, the latter being responsible for the instability of the reconstruction process. Additionally, partial measurements often encountered in practice render the inverse problem even more ill-posed. We refer to the reviews \cite{MR1955896,MR1669729, MR2986262} and the references therein for more details. A standard cure against the ill-posedness is to regularize the inverse mapping. In this paper the regularization is achieved by considering smooth perturbations of the domains $\Omega^+$.
	
	To solve problem \eqref{EIT-SO}, we use an optimization approach by minimizing the shape functionals
	\begin{align}
	\label{eit3.1}J_1(\Omega^+) &= \frac{1}{2}\int_D (\udi(\Omega^+) - \une(\Omega^+))^2 ,\\
	\label{eit3.2}J_2(\Omega^+) &= \frac{1}{2}\int_{\Gamma_n} (\une(\Omega^+) - h)^2. 
	\end{align}
	Since $\udi,\une\in H^1(D)$ and $h\in H^{1/2}(\Gamma_n)$, $J_1$ and $J_2$ are well-defined. Note that $J_1$ and $J_2$ are redundant for the purpose of the reconstruction but our aim is to provide an efficient way of computing the shape derivative of two functions which are often encountered in the literature.  To compute these derivatives we use the approach described in Section \ref{section2}. First of all introduce
	\begin{align}
	\label{eit4.1}F_1(\phdi,\phne) & := \frac{1}{2}\int_D (\phdi - \phne)^2, \\
	\label{eit4.2}F_2(\phne) & := \frac{1}{2}\int_{\Gamma_n} (\phne - h)^2. 
	\end{align}
	Note that $J_1(\Omega^+) = F_1(\udi(\Omega^+),\une(\Omega^+))$ and $J_2(\Omega^+) =F_2(\une(\Omega^+))$. 
	Next consider $\mathfrak{P}$ a subset of $\mathcal{P}(D)$ and the Lagrangian $\mathcal{L}:\mathfrak{P}\times H^1_d(D)\times H^1_d (D)\times H^1_0 (D)\times H^1_d (D)\rightarrow \R$: 

\begin{align}
\label{G}
\begin{split}		
\mathcal{L}(\Omega^+,\varbf,\psib) & := \alpha_1 F_1(\phdi,\phne) + \alpha_2 F_2(\phne)\\ 
& + \int_D \sigma \nabla \phdi\cdot\nabla \psdi - f\psdi  
+ \int_{\Gamma_n} -\sigma^-\partial_n\psdi (\phdi - h)\\ 
&+ \int_D \sigma \nabla \phne\cdot\nabla \psne - f\psne - \int_{\Gamma_n} g\psne,
\end{split}		
\end{align}
	where $\varbf :=(\phdi,\phne)$ and $\psib :=(\psdi,\psne)$. 
	The term $-\sigma^-\partial_n\psdi$ in the second integral of \eqref{G} is used to enforce the boundary condition \eqref{eit2.3_}.
	Introduce the objective functional
	\begin{align*}
	J(\Omega^+) := \alpha_1 J_1(\Omega^+) + \alpha_2 J_2(\Omega^+).
	\end{align*}
	%

To compute the derivative of the Lagrangian depending on \eqref{eit1.1_}-\eqref{eit2.3_} we apply the averaged adjoint method from Section \ref{section2}. 
	

	\subsection{State and adjoint equations}
	The state $\ub:=(\udi,\une)$ and adjoint state $\pb:=(\pdi,\pne)$ are solutions of the equations: 
	\begin{align}
		\label{lag1} \partial_{\psib}  \mathcal{L}(\Omega^+,\ub, \pb)(\hat \psib) &= 0\mbox{ for all } \hat \psib \in H^1_0(D)\times H^1_d(D),\\
		\label{lag2} \partial_{\varbf}   \mathcal{L}(\Omega^+, \ub ,\pb)(\hat \varbf) &= 0\mbox{ for all } \hat \varbf \in H^1_d(D)\times H^1_d(D) .
		\end{align}
	
	Writing \eqref{lag1} explicitely, one can obtain easily the state equations \eqref{varun} and \eqref{varud}. 
	Then \eqref{lag2} yields the equation for the adjoint $\pdi$:
	$$\partial_{\phdi} \mathcal{L}(\Omega^+,\ub,\pb)(\hat{\varphi}_d) = 0,\mbox{ for all } \hat{\varphi}_d \in H^1_d (D),$$
	which leads to 
	\begin{align}\label{eq:lambda}
	&\int_D \sigma \nabla  \pdi\cdot \nabla \hat{\varphi}_d \dx  = - \alpha_1 \int_D (\udi - \une) \hat{\varphi}_d \dx -\int_{\Gamma_n} -\sigma^-\partial_n\pdi\hat{\varphi}_d \ds \quad \text{ for all } \hat{\varphi}_d \in H^1_d (D)
	\end{align}
	which is the variational formulation for the adjoint state $\pdi$. This yields the following variational formulation when test functions are restricted to $H^1_0(D)$:
	\begin{align}\label{eit4b.0}
	& \int_D \sigma \nabla  \pdi\cdot \nabla \widetilde\varphi \dx = -\alpha_1 \int_D (\udi - \une) \widetilde\varphi\dx\quad  \text{ for all } \widetilde\varphi\in  H^1_0(D).
	\end{align}
	If we use Assumption~\ref{assump1}, we get $\pdi\in PH^k(D)$ and using Green's formula in $\Omega^+$ and $\Omega^-$ with  $\widetilde\varphi\in C_c^\infty(\Omega^+)$ 
	and $\widetilde\varphi \in C_c^\infty(\Omega^-)$, we obtain the strong form
	\begin{align}
	\label{eit4b.1}-\divv(\sigma \nabla \pdi) & = -\alpha_1(\udi - \une)\mbox{ in }\Omega^+ \mbox{ and } \Omega^-.
	\end{align}
	Hence using now Green's formula in \eqref{eq:lambda} and using 
	\eqref{eit4b.1} gives 
	\begin{align*}
	& \int_{\Gamma^+} [\sigma \dn \pdi]_{\Gamma^+} \hat{\varphi}_d \ds  + \int_{\Gamma_n} (\sigma\dn \pdi  - \sigma^-\partial_n\pdi) \hat{\varphi}_d\ds =0 \quad \text{ for all } \hat{\varphi}_d\in H^1_d (D),
	\end{align*}
	where $[\sigma \dn \pdi]_{\Gamma^+} = \sigma^+ \dn \pdi^+ - \sigma^- \dn \pdi^-$ is the jump of $\sigma \dn \pdi$ across $\Gamma^+$. Since 
	the integral on $\Gamma_n$ above vanishes and $p_d\in H^1_0(D)$, we obtain 
	\begin{align}
	\label{eit4b.3} \pdi & = 0 \mbox{ on }\Gamma,\\
	\label{eit4b.4} \sigma^+ \dn \pdi^+ & = \sigma^- \dn \pdi^-  \mbox{ on }\Gamma^+.
	\end{align}
	
	Finally solving 
	$$\partial_{\phne} \mathcal{L}(\Omega^+,\ub,\pb)(\hat{\varphi}_n) = 0,  \mbox{ for all } \hat{\varphi}_n\in H^1_d (D),$$
	leads to the variational formulation
	\begin{align}\label{eit5}
	& \int_D -\alpha_1(\udi - \une)\hat{\varphi}_n +\sigma \nabla \pne\cdot \nabla \hat{\varphi}_n  + \int_{\Gamma_n}  \alpha_2(\une - h )\hat{\varphi}_n =0
	\end{align} 
	for all $\hat{\varphi}_n \in H^1_d (D)$.

	Similarly as for $\pdi$ we get, under Assumption~\ref{assump1}, $\pne\in PH^k(D)$ and the strong form 
	\begin{align}
	\label{eit4.3} -\divv(\sigma \nabla \pne) & = \alpha_1(\udi - \une)\mbox{ in }\Omega^+ \mbox{ and } \Omega^-,\\
	\label{eit4.4} \sigma\dn \pne & = -\alpha_2(\une - h )\mbox{ on }\Gamma_n,\\
	\label{eit4.5} \pne & = 0\mbox{ on }\Gamma_d,\\
	\sigma^+\partial_n\pne^+ & =\sigma^-\partial_n \pne^- \text{ on } {\Gamma^+}, \quad  \pne^+=\pne^- \text{ on } {\Gamma^+}.
	\end{align}
 	\subsection{Shape derivatives}
	
	Let us consider a transformation $\Phi_t^{\VV}$ defined by \eqref{Vxt} with $\VV\in C^1_c(D,\R^d)$. Note that $\Phi_t^{\VV}(D) = D$ but in general $\Phi_t^{\VV}(\Omega^+) \neq \Omega^+$. We use the notation $\Omega^+(t):= \Phi_t^{\VV}(\Omega^+)$. Our aim is to show the shape differentiability of $J(\Omega^+)$ with the
	help of Theorem~\ref{thm:sturm}. 
	Following the methodology described in Section \ref{section2} we introduce
	\ben\label{eq:G_on_the_moved_domain}
	G(t,\varbf,\psib):=\mathcal{L}(\Omega^+(t),\varbf\circ\Phi_t^{-1},\psib\circ\Phi_t^{-1}).
	\een
We proceed to the  change of variables $\Phi_t(x)=y$ in \eqref{eq:G_on_the_moved_domain}  to get the canonical form \eqref{G_lag}. First of all let us denote $f_{\Omega^+(t)}=f^+\chi_{\Omega^+(t)}+f^-\chi_{\Omega^-(t)}$ and $\sigma_{\Omega^+(t)}=\sigma^+\chi_{\Omega^+(t)}+\sigma^-\chi_{\Omega^-(t)}$; recall that $\sigma^\pm$ are scalars but $f^{\pm}$ are functions. Then note that   the  change of variables $\Phi_t(x)=y$ leads to considering the following functions inside the integrals:
\begin{align*}
\sigma_{\Omega^+(t)}\circ \Phi_t
& =\sigma^+\chi_{\Omega^+(t)}\circ \Phi_t+\sigma^-\chi_{\Omega^-(t)}\circ \Phi_t = \sigma^+\chi_{\Omega^+}+\sigma^-\chi_{\Omega^-}= \sigma,\\
f_{\Omega^+(t)}\circ \Phi_t
& =f^+\circ \Phi_t\, \chi_{\Omega^+(t)}\circ \Phi_t
+f^-\circ \Phi_t\, \chi_{\Omega^-(t)}\circ \Phi_t
=f^+\circ \Phi_t\, \chi_{\Omega^+}
+f^-\circ \Phi_t\, \chi_{\Omega^-}.
\end{align*}
Thus we introduce the function $\tilde f_t := f^+\circ \Phi_t\, \chi_{\Omega^+} +f^-\circ \Phi_t\, \chi_{\Omega^-}$.
Now we  obtain the canonical form  \eqref{G_lag} for the Lagrangian:
	\begin{align}
	\label{G_Omega}
	G(t,\varbf,\psib) & = a(t,\varbf,\psib) + b(t,\varbf),
	\end{align}
	with
	\begin{align*}
	a(t,\varbf,\psib) :=& \int_D \sigma A(t)\nabla \phdi\cdot\nabla \psdi
	-\tilde f_t \psdi \xi(t)  - \int_{\Gamma_n} \sigma^{-1} \partial_n\psdi(\phdi - h) \\
	\notag & + \int_D \sigma A(t)\nabla \phne\cdot\nabla \psne 
	-\tilde f_t  \psne \xi(t)  - \int_{\Gamma_n} g\psne,\\
	b(t,\varbf) :=&  \frac{\alpha_1}{2}\int_D (\phdi - \phne)^2 \xi(t) + \frac{\alpha_2}{2}\int_{\Gamma_n} (\phne - h)^2
	\end{align*}
	where the Jacobian $\xi(t)$ and $A(t)$ are defined as
	$
	\xi(t)  := \det (D\Phi_t)$ and  
	$A(t)  := \xi(t)D\Phi_t^{-1}D\Phi_t^{-T}$.
	In the previous expression \eqref{G_Omega}, one should note that the integrals on subsets of $\Gamma$ are unchanged since $\Phi_t^{-1} = I$ on $\Gamma$.  Thus we  have $\Phi_t^{\VV}(D) = D$, however the terms inside the integrals on $D$ are modified by the change of variable since $\Phi_t^{-1}\neq I$ inside $D$.
	Note that
	\begin{align*}
	J(\Omega^+(t))=G(t,\ub^t,\psib),\text{ for all } \psib\in H^1_0 (D)\times H^1_d (D), 
	\end{align*}
	where $\ub^t=(\une^t,\udi^t):=(u_{n,t}\circ\Phi_t,u_{d,t}\circ\Phi_t)$ and $u_{n,t},u_{d,t}$ solve \eqref{varun},\eqref{varud}, respectively, with the domains $\Omega^+$ and $\Omega^-$ replaced by $\Omega^+(t)$ and $\Omega^-(t)$. 
	As one can verify by applying a change of variables to \eqref{varun} and \eqref{varud} on the domain $\Omega^+(t)$ the functions $\une^t,\udi^t$ satisfy
	\begin{align}\label{varun_t}
	\int_D \sigma A(t)\nabla \une^t \cdot \nabla \hat{\psi}_n &= \int_D \tilde f_t\hat{\psi}_n + \int_{\Gamma_n} g\hat{\psi}_n \mbox{ for all } \hat{\psi}_n\in H^1_d (D), \\
	\label{varud_t}
	\int_D \sigma A(t) \nabla \udi^t \cdot \nabla \hat{\psi}_d &= \int_D \tilde f_t \hat{\psi}_d  \mbox{ for all } \hat{\psi}_d\in H^1_0(D) .
	\end{align}
	Applying standards estimates for elliptic partial differential equations and the fact that $A(t)$ is uniformly bounded from below and above for $t$ small enough, we infer from equations \eqref{varun_t},\eqref{varud_t} the existence of constants  $C_1,C_2>0$ independent of $t$ and $\tau>0$ such that for all $t\in [0,\tau]$:
	\ben\label{apriori_u_D_u_N}
	\|\udi^t\|_{H^1(D)}\le C_1,\quad \text{ and } \|\une^t\|_{H^1(D)}\le C_2.
	\een 
	From these estimates, we get
	$\udi^t\rightharpoonup w_d \text{ and } \une^t\rightharpoonup w_n \text{ in } H^1(D)\mbox{ as }t\to 0.$
	Passing to the limit in \eqref{varun_t} and \eqref{varud_t} yields  $w_d=\udi$ and $w_n=\une$ by uniqueness.
	
	Let us now check Assumption \ref{amp:gateaux_diffbar_G} and the conditions of Theorem~\ref{thm:sturm} for the function $G$ given by \eqref{G_Omega} and the Banach spaces $E = H^1_d (D)\times H^1_d (D)$ and $F = H^1_0 (D)\times H^1_d (D)$. First of all equation \eqref{eq:state_G} admits a unique solution $\ub^t := (\une^t,\udi^t)$ for each $t\in[0,\tau]$.
	The  conditions of Assumption \ref{amp:gateaux_diffbar_G} are readily satisfied and also the
	function $G$ is affine with respect to $\psi = (\psi_d,\psi_n)$.
	
	Regarding the conditions of Theorem~\ref{thm:sturm}, first note that applying   Lax-Milgram's lemma, we check that both equations \eqref{eq_bar_p_t_1} and \eqref{eq_bar_p_t_2} have indeed a unique solution in $F = H^1_0 (D)\times H^1_d (D)$: 
	\begin{align}
	\label{eq_bar_p_t_1}  &\int_D \sigma A(t)\nabla \pdi^t\cdot\nabla \hat{\varphi}_d +\frac{\alpha_1}{2}\int_D \xi(t) (\udi^t+\udi - (\une^t+\une)) \hat{\varphi}_d
	- \int_{\Gamma_n} \sigma^{-1}\partial_n \pdi^t\hat{\varphi}_d =0,\\ 
	\label{eq_bar_p_t_2} &\int_D \sigma A(t)\nabla \pne^t\cdot\nabla \hat{\varphi}_n-\frac{\alpha_1}{2}\int_D \xi(t)(\udi^t+\udi - (\une^t+\une)) \hat{\varphi}_n   + \frac{\alpha_2}{2}\int_{\Gamma_n} (\une^t+\une - 2h)\hat{\varphi}_n =0,
	\end{align}
	for all $\hat{\varphi}_d$, $\hat{\varphi}_n$ in $H^1_d (D)$. 
	Therefore there exists a unique solution $\pb^t= (p_n^t,p_d^t)$ of the averaged adjoint equation \eqref{averated_}.
	
	Now we check Assumption (H1). Testing  \eqref{eq_bar_p_t_1} with 
	$\hat{\varphi}_d=p_d^t$ and \eqref{eq_bar_p_t_2} with $\hat{\varphi}_n=p_n^t$, we conclude by an application of H\"older's inequality together with \eqref{apriori_u_D_u_N} the existence of constants $C_1,C_2$ and $\tau>0$ such that for all $t\in [0,\tau]$
	\benn
	\|p_d^t\|_{H^1(D)}\le C_1,\quad \text{ and } \|p_n^t\|_{H^1(D)}\le C_2.
	\eenn 
		We get that for each sequence $t_k$ converging to zero, there exists a subsequence also denoted $t_k$ such that 
		$p_d^{t_k}\rightharpoonup q_d$ and $p_n^{t_k}\rightharpoonup q_n$ for two elements $q_d,q_n\in H^1(D)$.
		Passing to the limit in 
		\eqref{eq_bar_p_t_1} and \eqref{eq_bar_p_t_2} yields  $q_d=\pdi$ and $q_n=\pne$ by uniqueness, where $\pdi$ and $\pne$ are solutions of the adjoint equations. Since the limit is unique, we have in fact $p_d^{t}\rightharpoonup \pdi$ and $p_n^{t}\rightharpoonup \pne$ as $t\to 0$.
	Finally, differentiating $G$ with respect to $t$ yields
	\begin{align*}
	\partial_tG(t,\varbf,\psib) & = \frac{\alpha_1}{2}\int_D (\phdi - \phne)^2  \xi(t)\tr(D\VV_t D \Phi_t^{-1})\\
	&\hspace{-1.5cm} + \int_D \sigma A'(t)\nabla \phdi\cdot\nabla \psdi -\tilde f_t \psdi \xi(t)\tr(D\VV_t D \Phi_t^{-1}) -\psdi  \widetilde{\nabla} f_t\cdot \VV_t  \xi(t)\\
	&\hspace{-1.5cm} + \int_D \sigma A'(t)\nabla \phne\cdot\nabla \psne - \tilde f_t\psne \xi(t)\tr(D\VV_t D \Phi_t^{-1}) -\psne \widetilde{\nabla} f_t\cdot \VV_t \xi(t).
	\end{align*} 
	where 
	$$\widetilde{\nabla} f_t := \nabla f^+\circ \Phi_t \, \chi_{\Omega^+} + \nabla f^-\circ \Phi_t\, \chi_{\Omega^-},\qquad \VV_t = \VV\circ\Phi_t$$
	$$A'(t) = \tr(D\VV^t D\Phi_t^{-1}) A(t) - D\Phi_t^{-T} D\VV_t A(t) -(D\Phi_t^{-T} D\VV_t A(t) )^T$$ 
	and $D\VV_t$ is the Jacobian matrix of $\VV_t$.
	In view of $\VV\in C^1_c(D,\R^d)$, the functions $t\mapsto D\VV_t$ and $t\mapsto \tr( D\VV_t \Phi_t^{-1})= \divv(\VV)\circ \Phi_t$ are continuous on $[0,T]$. Moreover $\phdi,\phne,\psdi,\psne$ are in $H^1(D)$, $f\in PH^1(D)$ so that $\partial_tG(t,\varbf,\psib) $ is well-defined for all $t\in [0,T]$. 
		Using the weak convergence of $\pb^t$ and the strong differentiability of 
		$t\mapsto A(t)$ and $t\mapsto \xi(t)$  it follows 
		\ben
		\lim_{t \searrow 0 }  \frac{G( t,\ub^0,\pb^t ) - G(0,\ub^0, \pb^t) }{t}=\partial_t G(0,\ub^0,\pb^0).
		\een
	
	Thus we have verified all assumptions from Theorem \ref{thm:sturm}. This yields
	\benn
	dJ(\Omega^+)(\VV)=\dt \left(G(t,\ub^t,\psib)\right)|_{t=0}=\partial_t G(0,\ub^0,\pb^0)\text{ for all } \psib\in F= H^1_0 (D)\times H^1_d (D) , 
	\eenn 
	and therefore we have proved the following result.
	\begin{proposition}[distributed shape derivative]\label{domexpr} 
		Let $D\subset\R^d$ be a Lipschitz domain,  $\VV\in C^1_c(D,\R^d)$, $f~\in~PH^1(D)$, $g\in H^{-1/2}(\Gamma_n)$, $h\in H^{1/2}(\Gamma_n)$, $\Omega^+\subset D$ is an open set, then the  shape derivative of $J(\Omega^+)$ is given by 
		\begin{align}\label{volexpr}
		\begin{split}
		dJ(\Omega^+)(\VV) & = \int_D \left(\frac{\alpha_1}{2}(\udi - \une)^2 -f(\pne+\pdi)\right)\divv \VV \\
		&\hspace{-1cm}+ \int_D  - (\pdi+\pne)\widetilde{\nabla} f\cdot \VV + \sigma A'(0)(\nabla \udi\cdot\nabla \pdi  + \nabla \une\cdot\nabla \pne)  ,
		\end{split}
		\end{align} 
		where $\widetilde{\nabla} f := \nabla f^+ \, \chi_{\Omega^+} + \nabla f^-\, \chi_{\Omega^-}$, $A'(0) = (\di \VV) I - D\VV^T - D\VV$,  $\une,\udi$ are solutions of  \eqref{varun},\eqref{varud} and  $\pne,\pdi$ of \eqref{eit5}, \eqref{eq:lambda}. 
		
		The shape derivative \eqref{volexpr} also has the tensor representation corresponding to \eqref{ea:volume_from}:
		\begin{align}\label{volexpr2}
		\begin{split}
		dJ(\Omega^+)(\VV) & = \int_D \Sb_1 \cdot D\VV + \Sb_0\cdot \VV  ,
		\end{split}
		\end{align} 
		where 
		\begin{align*}
		\Sb_1 & = -\sigma (\nabla \udi \otimes \nabla \pdi + \nabla \pdi \otimes \nabla \udi + \nabla \une \otimes \nabla \pne + \nabla \pne \otimes \nabla u_n)   \\
		&\quad  +\sigma(\nabla \udi\cdot\nabla \pdi  + \nabla \une\cdot\nabla \pne)I + \left(\frac{\alpha_1}{2}(\udi - \une)^2 -f(\pne+\pdi)\right)I,\\
		\Sb_0 & = - (\pdi+\pne)\widetilde{\nabla} f.
		\end{align*}
	\end{proposition}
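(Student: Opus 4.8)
The plan is to assemble the verifications already carried out in the preceding discussion and feed them into Theorem~\ref{thm:sturm}, and then to make the resulting quantity $\partial_t G(0,\ub^0,\pb^0)$ explicit. First I would record that all hypotheses of Theorem~\ref{thm:sturm} are in place for $G$ as in \eqref{G_Omega} with $E = H^1_d(D)\times H^1_d(D)$ and $F = H^1_0(D)\times H^1_d(D)$: the function $G$ is affine in $\psib$; the state equation \eqref{eq:state_G}, which is \eqref{varun_t}--\eqref{varud_t} here, has a unique solution $\ub^t$; Assumption~\ref{amp:gateaux_diffbar_G} holds since $s\mapsto G(t,\cdot,\psib)$ is a polynomial of degree two along the segment joining $u^0$ to $u^t$; Lax--Milgram applied to \eqref{eq_bar_p_t_1}--\eqref{eq_bar_p_t_2} yields a unique averaged adjoint $\pb^t$; and (H1) has been checked using the uniform bound $\|\pb^t\|_{H^1(D)}\le C$, the ensuing weak convergence $\pb^t\rightharpoonup \pb^0$, and the strong differentiability of $t\mapsto A(t)$, $t\mapsto\xi(t)$ and $t\mapsto\tilde f_t$ on $[0,\tau]$. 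Theorem~\ref{thm:sturm} then gives $dJ(\Omega^+)(\VV)=\partial_t G(0,\ub^0,\pb^0)$, and by \eqref{lag1}--\eqref{lag2} the points $\ub^0=(\udi,\une)$ and $\pb^0=(\pdi,\pne)$ are exactly the state and adjoint state.

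Next I would evaluate the expression for $\partial_t G(t,\varbf,\psib)$ derived above at $t=0$. Using $\Phi_0=\mathrm{id}$, $D\Phi_0=I$, $\xi(0)=1$, $A(0)=I$, $\VV_0=\VV$, $\tr(D\VV_0 D\Phi_0^{-1})=\divv\VV$, $\widetilde{\nabla}f_0=\widetilde{\nabla}f$, and $A'(0)=(\di\VV)I-D\VV^T-D\VV$, and noting that the integrals over $\Gamma_n$ carry no $t$-dependence and hence drop out, I would set $\varbf=\ub$, $\psib=\pb$ and collect the terms. The two contributions of type $\divv\VV$ coming from $\xi'(0)=\divv\VV$ together with the convective term $-(\pdi+\pne)\widetilde{\nabla}f\cdot\VV$ produced by differentiating $t\mapsto\tilde f_t$ yield precisely \eqref{volexpr}. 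The one point requiring care is the product rule applied to $t\mapsto \tilde f_t\,\xi(t)$, which must be seen to generate both the $-f(\pne+\pdi)\divv\VV$ term and the $-(\pdi+\pne)\widetilde{\nabla}f\cdot\VV$ term.

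Finally, to pass from \eqref{volexpr} to the tensor representation \eqref{volexpr2}, I would rewrite $\sigma A'(0)\nabla\udi\cdot\nabla\pdi$ and the analogous $u_n$-term by expanding $A'(0)=(\di\VV)I-D\VV^T-D\VV$ and using the elementary identities $(D\VV\nabla u)\cdot\nabla p=(\nabla p\otimes\nabla u)\cdot D\VV$ and $(D\VV^T\nabla u)\cdot\nabla p=(\nabla u\otimes\nabla p)\cdot D\VV$ already exploited in the elliptic example of Section~\ref{sec:tensor_representation}, together with $c\,\divv\VV=(cI)\cdot D\VV$ for the scalar coefficients $c$. Collecting all matrix coefficients of $D\VV$ gives $\Sb_1$, while the residual term $-(\pdi+\pne)\widetilde{\nabla}f\cdot\VV$ is $\Sb_0\cdot\VV$. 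The main obstacle is not analytic---the uniform a priori bounds, weak compactness of the averaged adjoints, and the limit passage for (H1) are already available---but purely computational: correctly differentiating the products $\tilde f_t\,\xi(t)$ and $\sigma A(t)\nabla\varbf\cdot\nabla\psib$ at $t=0$, confirming via $\Phi_t^{\VV}=\mathrm{id}$ on $\Gamma$ that the boundary terms on $\Gamma_n$ genuinely vanish, and reorganizing the symmetric and antisymmetric parts of $D\VV$ into the single tensor $\Sb_1$.
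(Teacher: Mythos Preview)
Your proposal is correct and follows essentially the same route as the paper: the argument given in the text immediately preceding the proposition already verifies (H0), (H1), and the unique solvability of the state and averaged adjoint systems, and then applies Theorem~\ref{thm:sturm} to obtain $dJ(\Omega^+)(\VV)=\partial_t G(0,\ub^0,\pb^0)$, after which the explicit formula \eqref{volexpr} and the tensor form \eqref{volexpr2} are read off by expanding $A'(0)$ via the identity $(D\VV\nabla u)\cdot\nabla p=(\nabla p\otimes\nabla u)\cdot D\VV$. Your outline is a faithful (and slightly more explicit) recapitulation of that argument.
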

	Note that the volume expression of the shape gradient in  Proposition \ref{domexpr}  has been obtained without any regularity assumption on $\Omega^+$. In order to obtain a boundary expression on the interface $\Gamma^+$  we need more regularity of  $\Omega^+$ provided by Assumption \ref{assump1}. If it is satisfied,  we can apply Corollary \ref{corollary2} to obtain directly the boundary expression of the shape derivative, using mainly the standard tensor relation $(\nabla \udi \otimes \nabla \pdi) n = (\nabla \pdi\cdot n)\nabla \udi$, which yields Proposition \ref{prop2}.
	%
	\begin{proposition}[boundary expression]\label{prop2}
		Under Assumption \ref{assump1} and $\VV\in C^1_c(D,\R^d)$ the shape derivative of $J(\Omega^+)$ is given by
   \begin{align*}
   dJ(\Omega^+)(\VV)  =&  \int_{\Gamma^+} \left[ \sigma(-\dn \udi\dn \pdi - \dn \une\dn \pne )\right]_{\Gamma^+} \VV\cdot n
   \\
   & + \int_{\Gamma^+}  ( [\sigma]_{\Gamma^+}(\nabla_{\Gamma^+} \udi\cdot\nabla_{\Gamma^+} \pdi + \nabla_{\Gamma^+} \une\cdot \nabla_{\Gamma^+} \pne ) -[f]_{\Gamma^+}(\pne+\pdi)) \VV\cdot n.
   \end{align*}
	\end{proposition}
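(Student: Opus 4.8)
The plan is to obtain the boundary expression \emph{directly} from the distributed tensor representation already established in Proposition~\ref{domexpr}, by invoking Corollary~\ref{corollary2}. Note that \eqref{volexpr2} is precisely of the form \eqref{ea:volume_from} with $k=1$, with volume tensors $\Sb_1,\Sb_0$ given explicitly and with vanishing boundary tensors $\Sf_1\equiv\Sf_0\equiv 0$. So the first step is to check that, under Assumption~\ref{assump1}, the hypotheses of Proposition~\ref{tensor_relations} and Corollary~\ref{corollary2} hold: $\Gamma^+$ is of class $C^k$ with $k\ge 2$, and standard elliptic regularity for the transmission problems gives $\une,\udi,\pne,\pdi\in PH^k(D)$, so that the products of gradients appearing in $\Sb_1$ lie in $W^{1,1}(\Omega^+)$ and in $W^{1,1}(\Omega^-)$ separately (for the quadratic gradient terms in the borderline case $k=2$ one uses the Sobolev embedding $H^1\hookrightarrow L^6$ valid for $d\le 3$). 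Since $\Sf_1\equiv 0$ we are moreover in the situation $\alpha\equiv 0$ of Corollary~\ref{corollary2}.

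Granting this, Corollary~\ref{corollary2} applied with $\Omega=\Omega^+$ (so that $D\setminus\overline{\Omega^+}=\Omega^-$) yields
\[
dJ(\Omega^+)(\VV)=\int_{\Gamma^+} g_1\,\VV\cdot n\,ds,\qquad g_1=[(\Sb_1^+-\Sb_1^-)n]\cdot n ,
\]
the contributions of $\Sf_0$ and $\alpha\mathcal{H}$ being absent since both vanish. It then only remains to compute $\Sb_1 n\cdot n$ on each side of $\Gamma^+$. Using the elementary identity $(a\otimes b)n\cdot n=(a\cdot n)(b\cdot n)$ and the orthogonal splitting $\nabla w=(\dn w)\,n+\nabla_{\Gamma^+}w$, the two $I$-terms of $\Sb_1$ survive unchanged (because $n\cdot n=1$) while each symmetric rank-one pair contributes $-\sigma(\dn\,\cdot)(\dn\,\cdot)$, so a short calculation gives
\[
\Sb_1 n\cdot n=-\sigma\,\dn\udi\,\dn\pdi-\sigma\,\dn\une\,\dn\pne+\sigma\bigl(\nabla_{\Gamma^+}\udi\cdot\nabla_{\Gamma^+}\pdi+\nabla_{\Gamma^+}\une\cdot\nabla_{\Gamma^+}\pne\bigr)+\frac{\alpha_1}{2}(\udi-\une)^2-f(\pne+\pdi).
\]

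Finally I would take the jump $[\,\cdot\,]_{\Gamma^+}$ of this expression. The transmission conditions make $\udi,\une,\pdi,\pne$ continuous across $\Gamma^+$, hence their tangential gradients on $\Gamma^+$ coincide from both sides (the tangential gradient depends only on the trace on $\Gamma^+$), so the $\sigma(\nabla_{\Gamma^+}\cdot\,\cdot)$-terms contribute through $[\sigma]_{\Gamma^+}$ only; the term $(\udi-\une)^2$ is continuous and drops out; and $\pne+\pdi$ is continuous, so the last term contributes through $[f]_{\Gamma^+}$ only. Substituting $g_1$ into $dJ(\Omega^+)(\VV)=\int_{\Gamma^+} g_1\,\VV\cdot n\,ds$ then yields exactly the stated formula. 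I expect the only genuine obstacle to be the regularity bookkeeping of the first paragraph — verifying that $\Sb_1$ is $W^{1,1}$ on each of $\Omega^+$ and $\Omega^-$ and that $\Gamma^+\in C^2$, which is what legitimises the boundary traces and the integration by parts underlying Corollary~\ref{corollary2}; once that is secured, the rest is the short algebra indicated above.
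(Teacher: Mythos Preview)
Your proposal is correct and follows exactly the route the paper indicates: apply Corollary~\ref{corollary2} to the tensor representation \eqref{volexpr2} from Proposition~\ref{domexpr} (with $\Sf_1=\Sf_0=0$, hence $\alpha=0$), compute $\Sb_1 n\cdot n$ via the identity $(a\otimes b)n\cdot n=(a\cdot n)(b\cdot n)$ and the tangential/normal splitting of the gradients, and simplify the jump using the continuity of $\udi,\une,\pdi,\pne$ across $\Gamma^+$. The paper merely sketches this in one sentence, so your write-up is in fact more detailed than the original, including the regularity bookkeeping that the paper leaves implicit.
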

	Note that our results cover and generalize several results that can be found in the literature of shape optimization approaches for EIT, including \cite{MR2329288,MR2536481}. For instance taking $\alpha_2 = 1$, $\alpha_1 = 0$ in Proposition \ref{prop2} we get $p_D \equiv 0$ which yields the same formula
 as the one obtained in \cite[pp. 533]{MR2329288}. 
	
	Note also that from a numerical point of view, the boundary expression in Proposition \ref{prop2} is delicate to compute compared to the domain expression in Proposition \ref{domexpr} for which the gradients of the state and adjoint states can be straightforwardly computed at grid points when using the finite element method for instance. The boundary expression, on the other hand, needs here the computation of the normal vector and the interpolation of the gradients on the interface $\Gamma^+$ which requires a precise description of the boundary and introduces an additional error.
	\section{Level set method}\label{section5}
	The level set method, originally introduced in \cite{MR965860}, gives a general framework for the computation of evolving interfaces using an implicit representation of these interfaces.  The core idea of this method is to represent the boundary of the moving domain $\Omega^+(t)\subset D \in \R^N$ as the level set of a continuous function $\phi(\cdot,t): D\to\R$.
	
	Let us consider the family of domains $\Omega^+(t)\subset D$ as defined in \eqref{domain}. Each domain $\Omega^+(t)$ can be defined as
	\begin{equation}
	\Omega^+(t) :=\{x\in D,\ \phi(x,t) < 0\}
	\end{equation}
	where $\phi: D\times \R^+ \to \R$ is continuous and called {\it level set function}. Indeed, if we assume $|\nabla\phi(\cdot,t)|\neq 0$ on the set  $\{x\in D,\ \phi(x,t) = 0\}$ then we have
	\begin{equation}
	\partial \Omega^+(t) = \{x\in D,\ \phi(x,t) = 0\},
	\end{equation}
	i.e. the boundary $\partial \Omega^+(t)$ is the zero level set of  $\phi(\cdot,t)$.

	Let $x(t)$ be the position of a particle on the boundary
	$\partial \Omega^+(t)$ moving with velocity $\dot{x}(t)=\VV(x(t))$ according to \eqref{Vxt}.
	Differentiating the relation $\phi(x(t),t)=0$ with respect to $t$
	yields the Hamilton-Jacobi equation:
	\begin{equation*}
	\partial_t\phi (x(t),t)+ \VV(x(t))\cdot \nabla \phi(x(t),t) = 0  \quad \mbox{ in } \partial \Omega^+(t)\times\R^+,
	\end{equation*}
	which is then extended to all of $D$ via the equation
	\begin{equation}
	\partial_t\phi(x,t) + \VV(x)\cdot \nabla \phi(x,t) = 0  \quad \mbox{ in } D\times\R^+,
	\label{eq:transport}
	\end{equation}
	or alternatively to $U\times\R^+$ where $U$ is a neighbourhood of $\partial \Omega^+(t)$.
	
	Traditionally, the level set method has been designed to track smooth interfaces moving along the normal direction to the boundary. Theoretically, this is supported by Theorem \ref{thm:structure_theorem}, i.e. if the domain $\Omega^+(t)$ and the shape gradient are smooth enough then the shape derivative only depends on $\VV\cdot n$ on $\partial\Omega^+(t)$. In this case, we may choose for the optimization a vector field $\VV = \vartheta_n n$ on $\partial\Omega^+(t)$. Then, noting that an extension to $D$ of the unit outward normal vector $n$ to $\Omega^+(t)$ is given 
	by $n =\nabla \phi /|\nabla \phi|$, and extending $\vartheta_n$ to all of $D$, one obtains from \eqref{eq:transport} the level set equation 
	\begin{equation}
	\partial_t \phi + \vartheta_n |\nabla \phi| = 0 \quad \mbox{ in } D\times\R^+.
	\label{eq:HJ1}
	\end{equation}
	The initial data $\phi(x,0)=\phi_0(x)$ accompanying the Hamilton-Jacobi equation \eqref{eq:transport} or \eqref{eq:HJ1} is chosen as the signed distance function to the
	initial boundary $\partial \Omega^+(0)$ in order to satisfy the condition $|\nabla u|\neq 0$ on $\partial\Omega^+$, i.e.
	\begin{equation}
	\phi_0(x) = 
	\left\{\begin{array}{rl}
	d(x,\partial \Omega^+(0)), & \mbox{ if } x\in (\Omega^+(0))^c, \\
	-d(x,\partial \Omega^+(0)), & \mbox{ if } x\in \Omega^+(0) .
	\end{array}
	\right.
	\end{equation}

	\subsection{Level set method and domain expression}
	
	In the case of the distributed shape derivative, for instance  \eqref{volexpr} or \eqref{volexpr2}, $\phi$ is not governed by \eqref{eq:HJ1} but rather by the Hamilton-Jacobi equation \eqref{eq:transport}. Indeed we obtain a descent direction $\VV$ defined in $D$ by solving \eqref{VP_1}, where $dJ(\Omega^+)$ is given by Proposition \ref{domexpr} which can subsequently be used in \eqref{eq:transport} to compute the evolution of $\phi$. On the other hand, in the usual level set method, one solves a PDE {\it on the  boundary} $\partial\Omega^+$ in an analogous way as for   \eqref{VP_1} (for instance using a Laplace-Beltrami operator), and uses the boundary expression from Proposition \ref{prop2} to obtain $\vartheta_n = \VV\cdot n$ on $\partial\Omega^+$. 
	
	Numerically it is actually more straightforward in many cases to use \eqref{eq:transport} instead of   \eqref{eq:HJ1}. Indeed, when using \eqref{eq:HJ1}, $\vartheta_n$ is initially only given on $\partial \Omega^+(t)$ and must be extended to the entire domain $D$ or at least to a narrow band around $\partial \Omega^+(t)$. 
	Therefore it is convenient to use \eqref{eq:transport} with  $\VV$  already defined in $D$ as is the case of the distributed shape derivative, which provides an extension to $D$ or to a narrow band around $\partial \Omega^+(t)$.
	
	In shape optimization,  $\vartheta_n$ usually depends on the solution of one or several PDEs and their gradient. Since the boundary $\partial\Omega^+(t)$  in general does not match the grid nodes where $\phi$ and the solutions of the  partial differential equations are defined in the numerical application, the computation of $\vartheta_n$ 
	requires the  interpolation on $\partial\Omega^+(t)$ of functions defined at the grid points only, complicating the numerical implementation  and introducing an 
	additional interpolation error. This is an issue in particular for interface problems where $\vartheta_n$ is the jump of a function across the interface, as in Proposition \ref{prop2}, which requires multiple interpolations and is error-prone. In the distributed shape derivative framework  $\VV$ only needs to be defined at grid nodes.

	\subsection{Discretization of the level set equation}
	Let $D$ be the unit square $D=(0,1)\times(0,1)$ to fix ideas.
	For the discretization of the Hamilton-Jacobi equation \eqref{eq:transport},
	we first define the mesh grid corresponding to $D$. We introduce the nodes $P_{ij}$
	whose coordinates are given by $(i\Delta x, j \Delta y)$, $1\leq i,j\leq N$ where $\Delta x$ 
	and $\Delta y$ are the steps discretization in the $x$ and $y$
	directions respectively. Let us also write $t^k = k\Delta t$
	the discrete time for $k\in \mathbb{N}$, where $\Delta t$ is the time
	step. We are seeking for an approximation
	$\phi_{ij}^k \simeq \phi(P_{ij},t^k)$.
	
	In the usual level set method, the level set equation \eqref{eq:HJ1} is discretized using an explicit upwind scheme proposed by Osher and Sethian \cite{MR1939127,MR965860,MR1700751}. This scheme applies to the specific form \eqref{eq:HJ1} but is not suited to discretize \eqref{eq:transport} required for our application. Equation \eqref{eq:transport} is of the form
	\begin{equation}
	\partial_t\phi + H(\nabla \phi)= 0  \quad \mbox{ in } D\times\R^+.
	\end{equation}
	where $ H(\nabla \phi) := \VV\cdot \nabla \phi $ is the so-called Hamiltonian. We use the Local Lax-Friedrichs flux originally conceived in \cite{MR1111446} and which reduces in our case to:
	$$\hat H^{LLF}(p^-,p^+,q^-,q^+)  = H\left(\frac{p^- + p^+}{2},\frac{q^- +q^+}{2}\right) -\frac{1}{2} (p^+-p^-)\alpha^x -\frac{1}{2}(p^+-p^-)\alpha^y  $$
	where $\alpha^x = |\VV_x|$, $\alpha^y = |\VV_y|$, $\VV = (\VV_x,\VV_y)$ and
	\begin{align*}
	p^- &= D_x^{-}\phi_{ij}=\dfrac{\phi_{ij}-\phi_{i-1,j}}{\Delta x}, &
	p^+  = D_x^{+}\phi_{ij}=\dfrac{\phi_{i+1,j}-\phi_{ij}}{\Delta x},\\
	q^- &= D_y^{-}\phi_{ij}=\dfrac{\phi_{ij}-\phi_{i,j-1}}{\Delta y}, &
	q^+  = D_y^{+}\phi_{ij}=\dfrac{\phi_{i,j+1}-\phi_{ij}}{\Delta y}
	\end{align*}
	are the backward and forward approximations of the $x$-derivative and $y$-derivative of 
	$\phi$ at $P_{ij}$, respectively. Using a forward Euler time discretization, the numerical scheme corresponding to \eqref{eq:transport} is 
	\begin{equation}
	\phi_{ij}^{k+1}=\phi_{ij}^k - \Delta t \ \hat H^{LLF}(p^-,p^+,q^-,q^+)
	\end{equation}
	

	For numerical accuracy, the solution of the level set equation 
	\eqref{eq:transport} should not be too flat or too steep. 
	This is fulfilled for instance if $\phi$ is the distance function
	i.e. $|\nabla \phi| = 1$. Even if one initializes $\phi$ using a
	signed distance function, the solution
	$\phi$ of the level set equation \eqref{eq:transport} does not generally
	remain close to a distance function. We may occasionally perform a reinitialization
	of $\phi$ by solving a parabolic equation up to the
	stationary state; see \cite{MR2356899,MR2459656,MR1723321}.
	Although in the level set method this reinitialization is standard, in the case of the distributed shape gradient, we observe experimentally that the level set function $\phi$ stays close to a distance function during the iterations and we do not need to reinitialize. 
		The regularization of the shape gradient could explain this observed stability of the level set function.
	
	The computational efficiency of the level set method can be improved by using the so-called ``narrow band'' approach introduced in \cite{MR1329634}, which consists in computing and updating the level set function only on a thin region around the interface. This allows to reduce the complexity of the problem to $N\log(N)$ instead of $N^2$ in two dimensions.
	In this paper we do not implement this approach but we mention that it could also be applied to the distributed shape derivative approach and equation \eqref{eq:transport} by taking $\VV$ with a support in a narrow band around the moving interface, which can be achieved by choosing the appropriate space $\Eb$ in \eqref{eq:bilinear}.

	\section{Application and numerical results}\label{section6}

	\subsection{Electrical impedance tomography}
	
	In this section we give numerical results for the problem of electrical impedance tomography presented in Section \ref{sec:eit}, precisely we look for an approximate solution of the shape optimization problem \eqref{EIT-SO}.
	Using the notations of Section  \ref{sec:eit} we take  $D = (0,1)\times (0,1)$ and $\Gamma_d = \emptyset$, i.e. we have measurements on the entire boundary $\Gamma$.  For easiness of implementation, we consider a slightly different problem than the one in Section \ref{sec:eit}. Denote $\Gamma_{t}$, $\Gamma_{b}$, $\Gamma_{l}$ and $\Gamma_{r}$ the four sides of the square, where the indices $t,b,l,r$ stands for top, bottom, left and right, respectively. We consider the following problems: find $\une\in H^1_{tb}(D)$
	\begin{equation}\label{varun-b}
	\int_D \sigma \nabla \une \cdot \nabla \varphi = \int_D f\varphi + \int_{\Gamma_{l}\cup\Gamma_{r}} g\varphi\ \mbox{ for all }\ \varphi\in H^1_{0,tb}(D) 
	\end{equation}
	and find $\udi\in H^1_{lr}(D)$ such that 
	\begin{equation}\label{varud-b}
	\int_D \sigma \nabla \udi \cdot \nabla \varphi = \int_D f\varphi + \int_{\Gamma_{t}\cup\Gamma_{b}} g\varphi\ \mbox{ for all }\ \varphi\in H^1_{0,lr}(D)  
	\end{equation}
	where
	\begin{align*}
	H^1_{tb}(D)&:=\{v\in H^1(D)\ |\ \; v=h\mbox{ on }\Gamma_t \cup \Gamma_b\}, \\
	H^1_{lr}(D)&:=\{v\in H^1(D)\ |\ \; v=h\mbox{ on }\Gamma_l \cup \Gamma_r\}, \\
	H^1_{0,tb}(D)&:=\{v\in H^1(D)\ |\ \; v=0\mbox{ on }\Gamma_t \cup \Gamma_b\}, \\
	H^1_{0,lr}(D)&:=\{v\in H^1(D)\ |\ \; v=0\mbox{ on }\Gamma_l \cup \Gamma_r\}.
	\end{align*}
	In our experiments we choose $f\equiv 0$. The results of Section \ref{sec:eit} can be straightforwardly adapted to equations \eqref{varun-b}, \eqref{varud-b}.
	
	We use the software package FEniCS for the implementation; see \cite{fenics:book}. The domain $D$ is meshed using a regular grid of $128\times 128$ elements and we describe the evolution of the interface $\Gamma^+$ using the level set method from Section \ref{section5}. The conductivity values are set to $\sigma_0 = 1$ and $\sigma_1 = 10$. 
	
	We obtain measurements $h_i$ corresponding to fluxes $g_i$, $i=1,..,I$, by taking the trace on $\Gamma$ of the solution of a Neumann problem where the fluxes are equal to $g_i$. To simulate real noisy EIT data, the measurements $h_i$ are corrupted by adding a normal Gaussian noise with mean zero and standard deviation $\delta*\|h_i\|_\infty$, where $\delta$ is a parameter. The noise level is computed as
	\begin{equation}\label{noise_compute}
	noise = \frac{\sum_{i=1}^I\|h_i - \tilde h_i\|_{L^2(\Gamma)}}{\sum_{i=1}^I\|h_i\|_{L^2(\Gamma)}} 
	\end{equation}
	where $\tilde h_i$ is the noisy measurement and $h_i$ the synthetic measurement without noise on $\Gamma$.

	We use a variation of the functional \eqref{eit3.1}, i.e. in our context:
	\begin{align}\label{funcJ_num}
	J(\Omega^+) &=  \sum_{i=1}^I \mu_i  \int_\Omega   \frac{1}{2} (u_{d,i}(\Omega^+) - u_{n,i}(\Omega^+))^2 ,
	\end{align}
	where $u_{d,i}$ and $u_{n,i}$ correspond to the different fluxes $g_i$.
	Here the coefficients $\mu_i$ are weights associated to the fluxes $g_i$. In our experiments we choose the weights $\mu_i$ such that each term of the sums in \eqref{funcJ_num} are equal to $1$ on initialization in order to have a well-distributed influence of each term. Practically, the $\mu_i$ are thus calculated during the first iteration.
	We use the distributed shape derivative $dJ(\Omega^+)$ from Proposition \eqref{domexpr}. 
	We obtain a descent direction by solving \eqref{VP_1} with $\Eb$ a finite dimensional subspace of $H^1_0(D)$ and
		$\mathcal B(v,w) = \int_D D v \cdot D w.$
		We choose $\Eb$ to be  the space of linear Lagrange elements.

	Since we use a gradient-based method we implement an Armijo line search to adjust the time-stepping. The algorithm is stopped when the decrease of the functional becomes insignificant, practically when the following stopping criterion is repeatedly satisfied: 
	$$J(\Omega^+_{k}) - J(\Omega^+_{k+1}) < \gamma (J(\Omega^+_{0}) - J(\Omega^+_{1})) $$  
	where $\Omega^+_{k}$ denotes the $k$-th iterate of $\Omega^+$. We take $\gamma = 5.10^{-5}$ in our tests.

	
	\begin{figure}[ht]
		\begin{center}
			\includegraphics[width=0.32\textwidth]{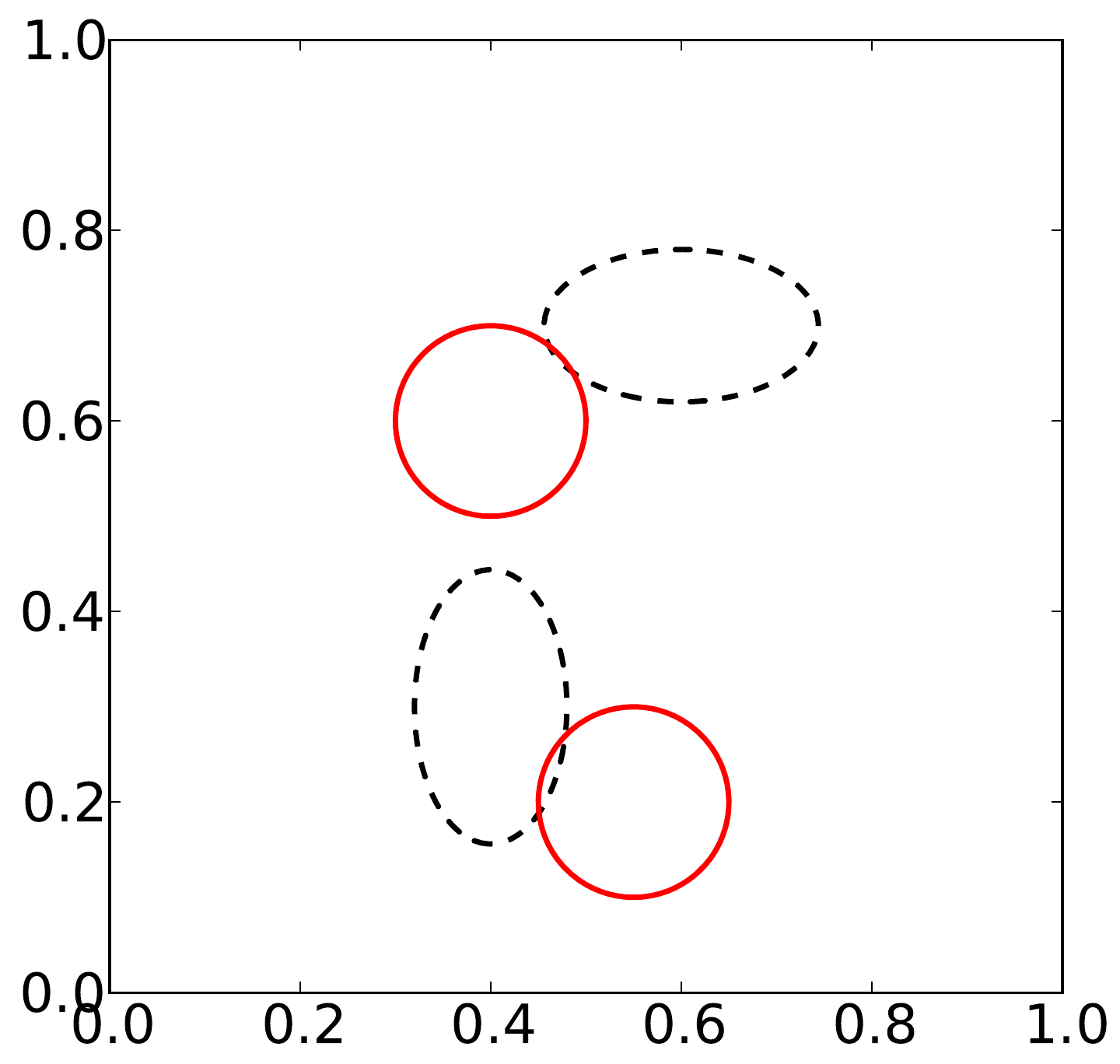}
			\includegraphics[width=0.32\textwidth]{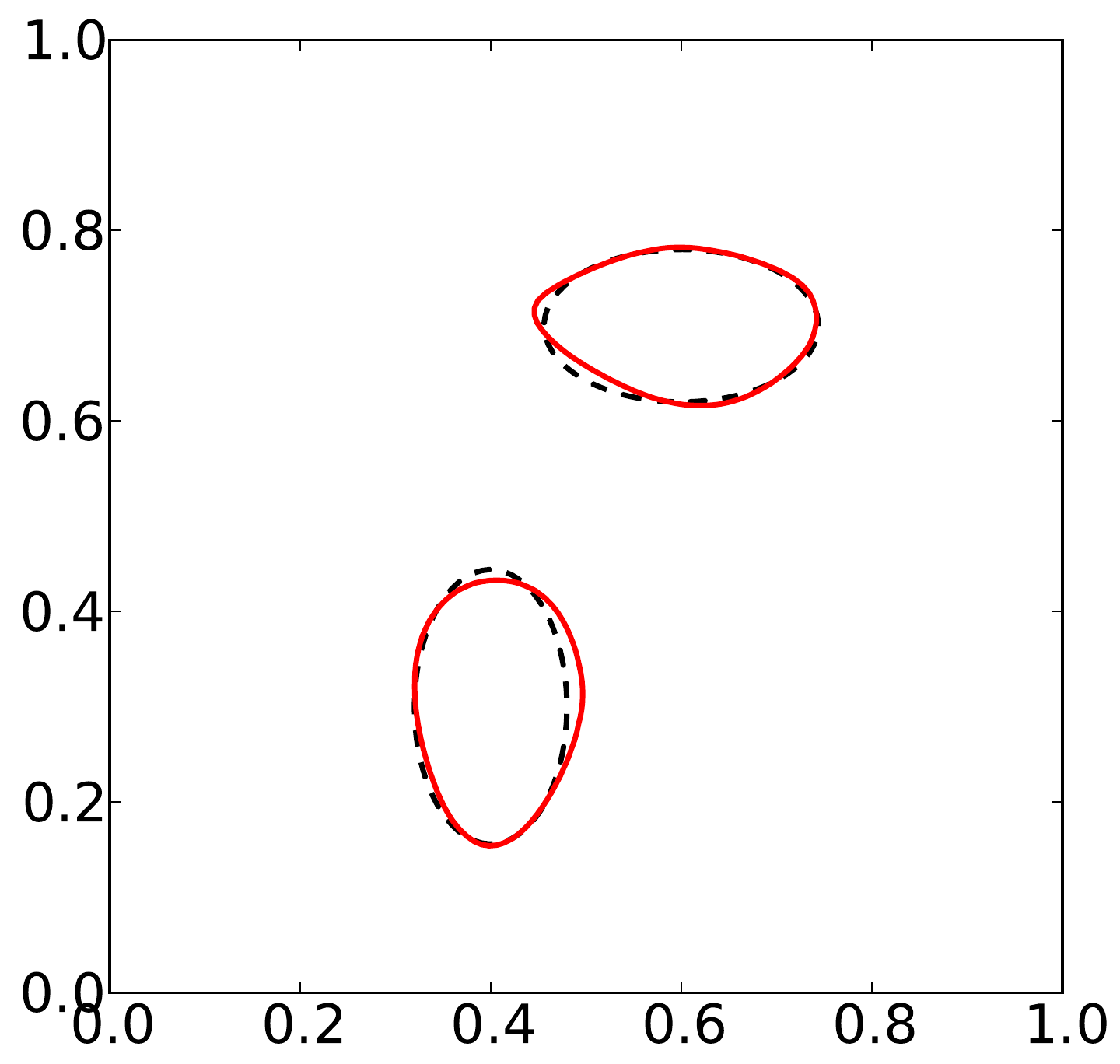}
			\includegraphics[width=0.32\textwidth]{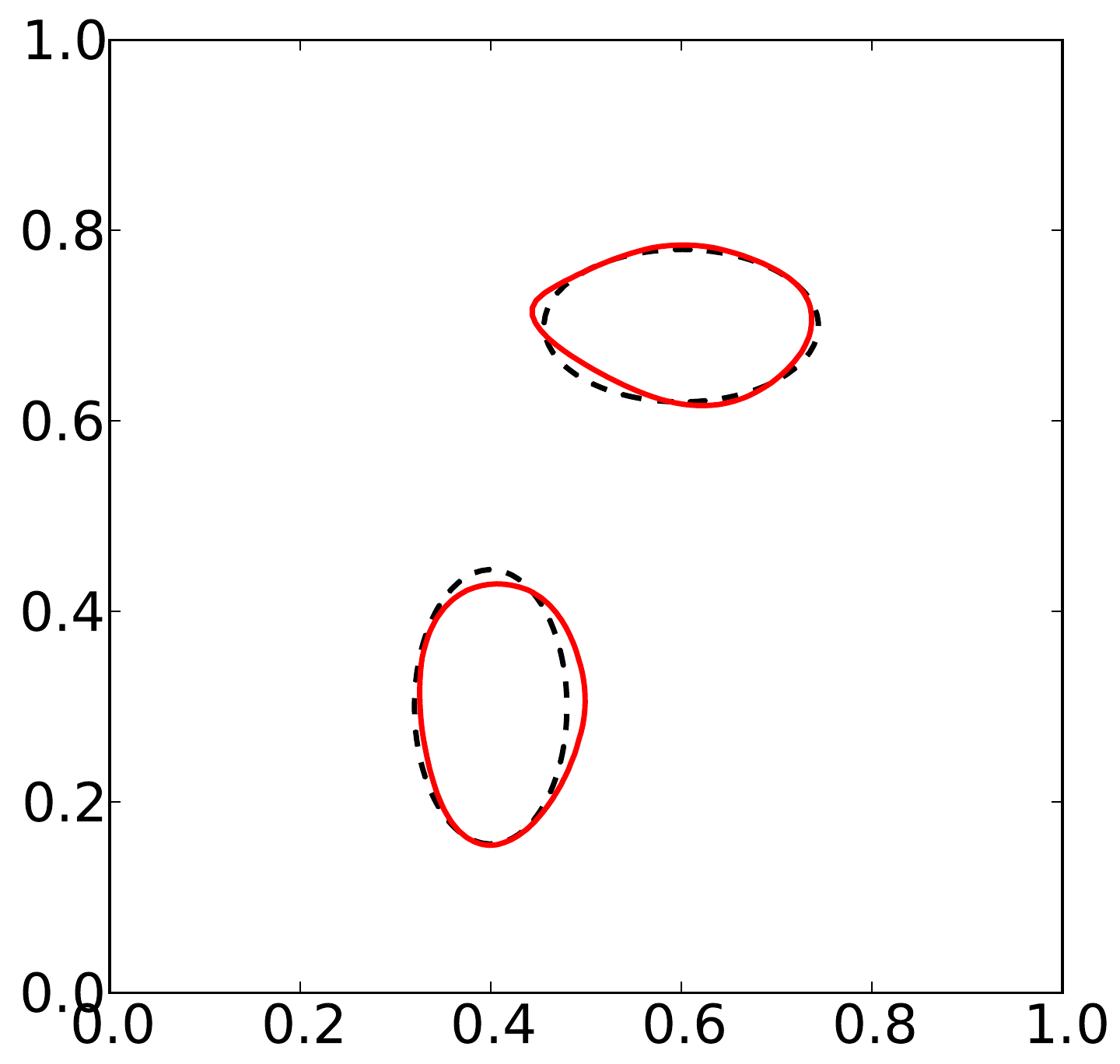}\\
			\includegraphics[width=0.32\textwidth]{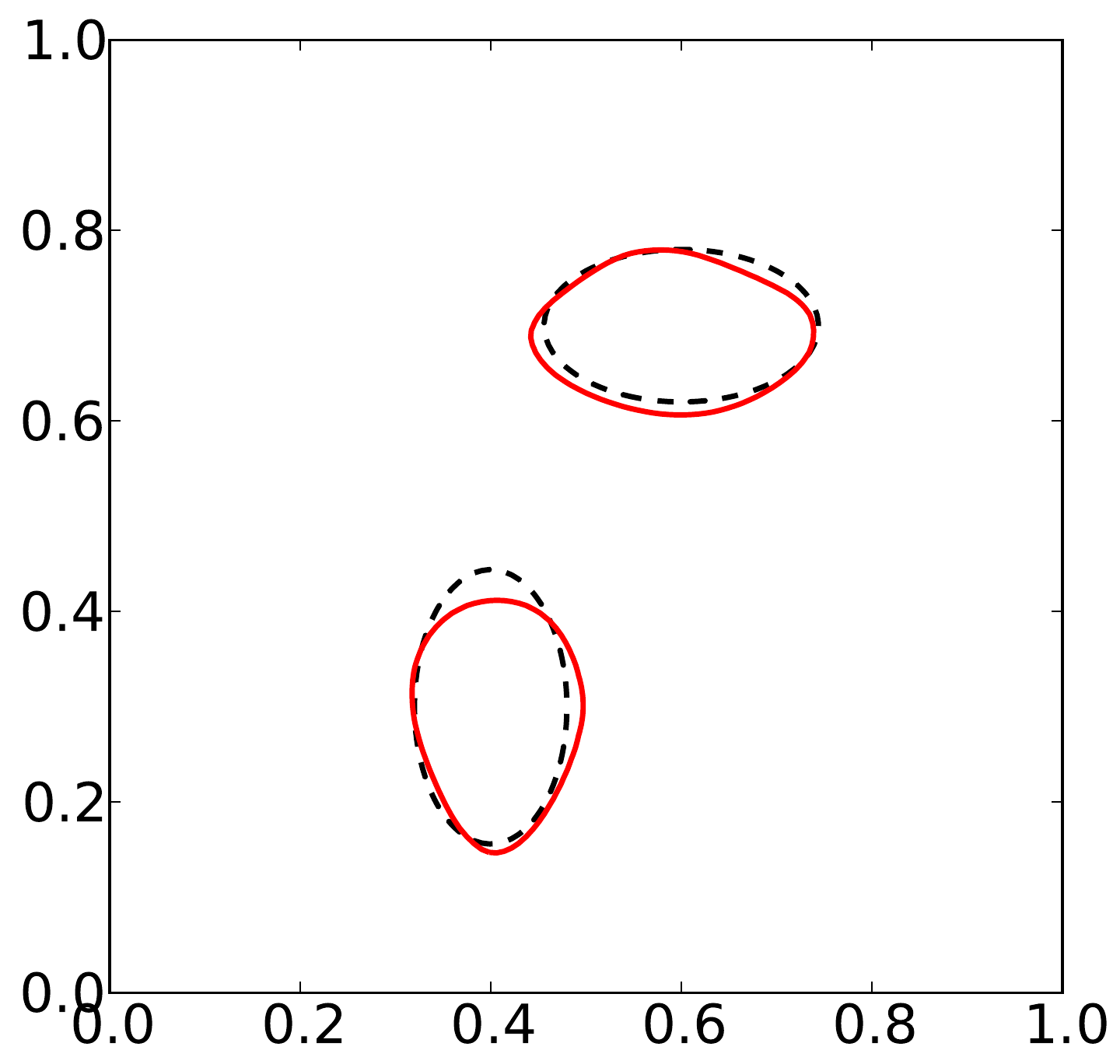}
			\includegraphics[width=0.32\textwidth]{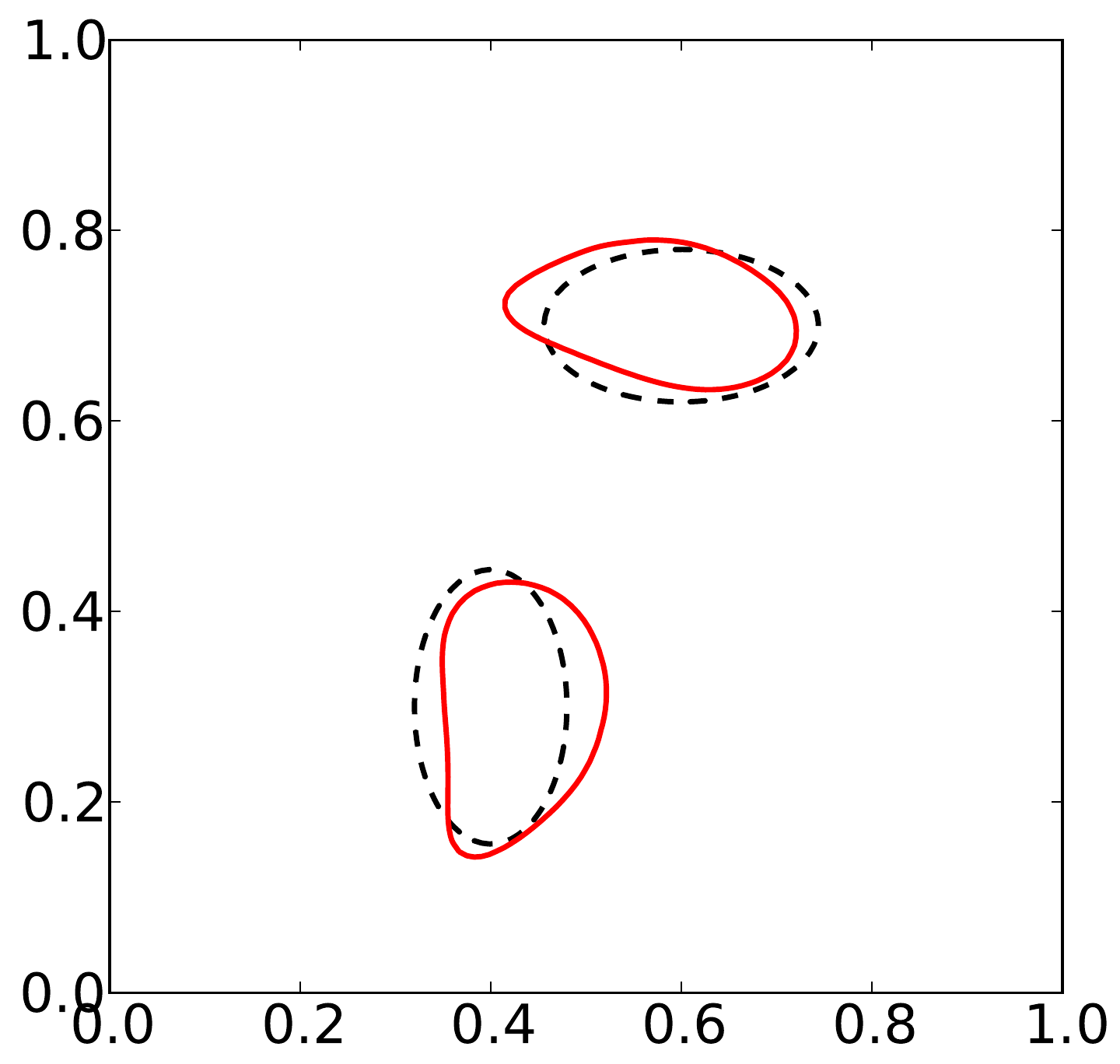}
			\includegraphics[width=0.32\textwidth]{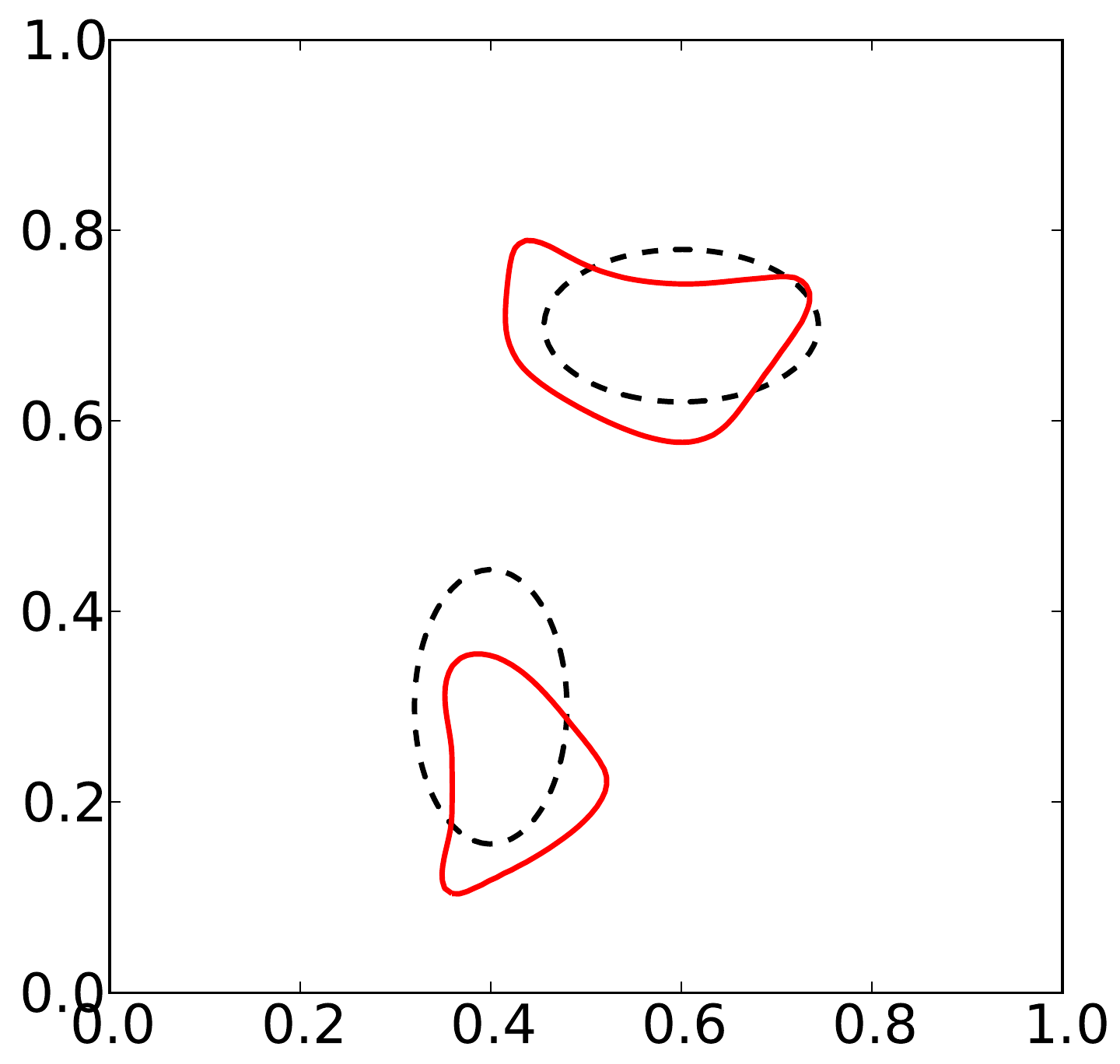}
		\end{center}
		\caption{Reconstruction (continuous contours) of two ellipses (dashed contours) with different noise levels and using three measurements. From left to right and top to bottom: initialization (continuous contours - top left), $0\%$ noise (367 iterations), $0.43\%$ noise (338 iterations), $1.44\%$ noise (334 iterations), $2.83\%$ noise (310 iterations), $7\%$ noise (356 iterations).}\label{rec_twoellipses_noise}
	\end{figure}
	
	In Figure \ref{rec_twoellipses_noise} we compare the reconstruction for different noise levels computed using \eqref{noise_compute}. We take in this example $I=3$, i.e. we use three fluxes $g_i$, $i=1,2,3$, defined as follows:
	\begin{align*}
	g_1 &= 1\mbox{ on }\Gamma_l\cup\Gamma_r \mbox{ and } g_1 = -1\mbox{ on }\Gamma_t\cup\Gamma_b, \\ 
	g_2 &= 1\mbox{ on }\Gamma_l\cup\Gamma_t \mbox{ and } g_2 = -1\mbox{ on }\Gamma_r\cup\Gamma_b,\\
	g_3 &= 1\mbox{ on }\Gamma_l\cup\Gamma_b \mbox{ and } g_3 = -1\mbox{ on }\Gamma_r\cup\Gamma_t. 
	\end{align*}
	Without noise, the reconstruction is very close to the true object and  degrades as the measurements become increasingly noisy,   as is usually the case in EIT. However, the reconstruction is quite robust with respect to noise considering that the problem is severely ill-posed. We reconstruct two ellipses and initialize with two balls placed at the wrong location. The average number of iterations until convergence is around $340$ iterations.  
	
	In Figure \ref{rec_three_inclusions} we reconstruct three inclusions this time using $I=7$ different measurements, with $1.55\%$ noise. The reconstruction is close to the true inclusion and is a bit degraded due to the noise. Figure \ref{rec_three_inclusions2} shows the convergence history of the cost functional in log scale for this example. 
	
	Our algorithm gives good results in comparison to existing results in the literature using level set methods to solve the EIT problem.
	In \cite{MR2132313} the EIT problem has been treated numerically using a level set method, which is not based on the use of shape derivatives but on the differentiation of a smooth approximation of the Heaviside function to represent domains. 
	In \cite{MR2536481} the level set method using the boundary expression of the shape derivative is used based on equation \eqref{eq:HJ1}.
	
	Our algorithm converges fast in comparison to \cite{MR2132313,MR2536481}:
	convergence occurs after around 300 iterations. In \cite{MR2132313} convergence occurs between 200 iterations for one inclusion and up to 50000 iterations for two inclusions. 
	In \cite{MR2536481} convergence occurs after 2000 or 10000 iterations on two examples with three inclusions. 
	Concerning measurements we obtain good reconstruction of two inclusions with $I=3$ and three inclusions with $I=7$, while in \cite{MR2132313} sets of $4,12,28$ and $60$ measurements are used but usually $60$ measurements are required for complicated shapes such as two inclusions. In \cite{MR2536481}, $60$ measurements are used.
	Nevertheless, our results are not directly comparable since the conductivities are unknown in \cite{MR2132313, MR2536481}, which makes the inverse problem harder and might explain the slower convergence.
	Also, the reconstructed shapes are not the same  although the complexity of the unknown shapes is comparable since we also consider two and three inclusions as in \cite{MR2132313, MR2536481}.
	Only an exact comparison using the same problem, test case, initialization, noise level, number and type of measurements could allow to conclude.

	\begin{figure}[ht]
		\begin{center}
			\includegraphics[width=0.4\textwidth]{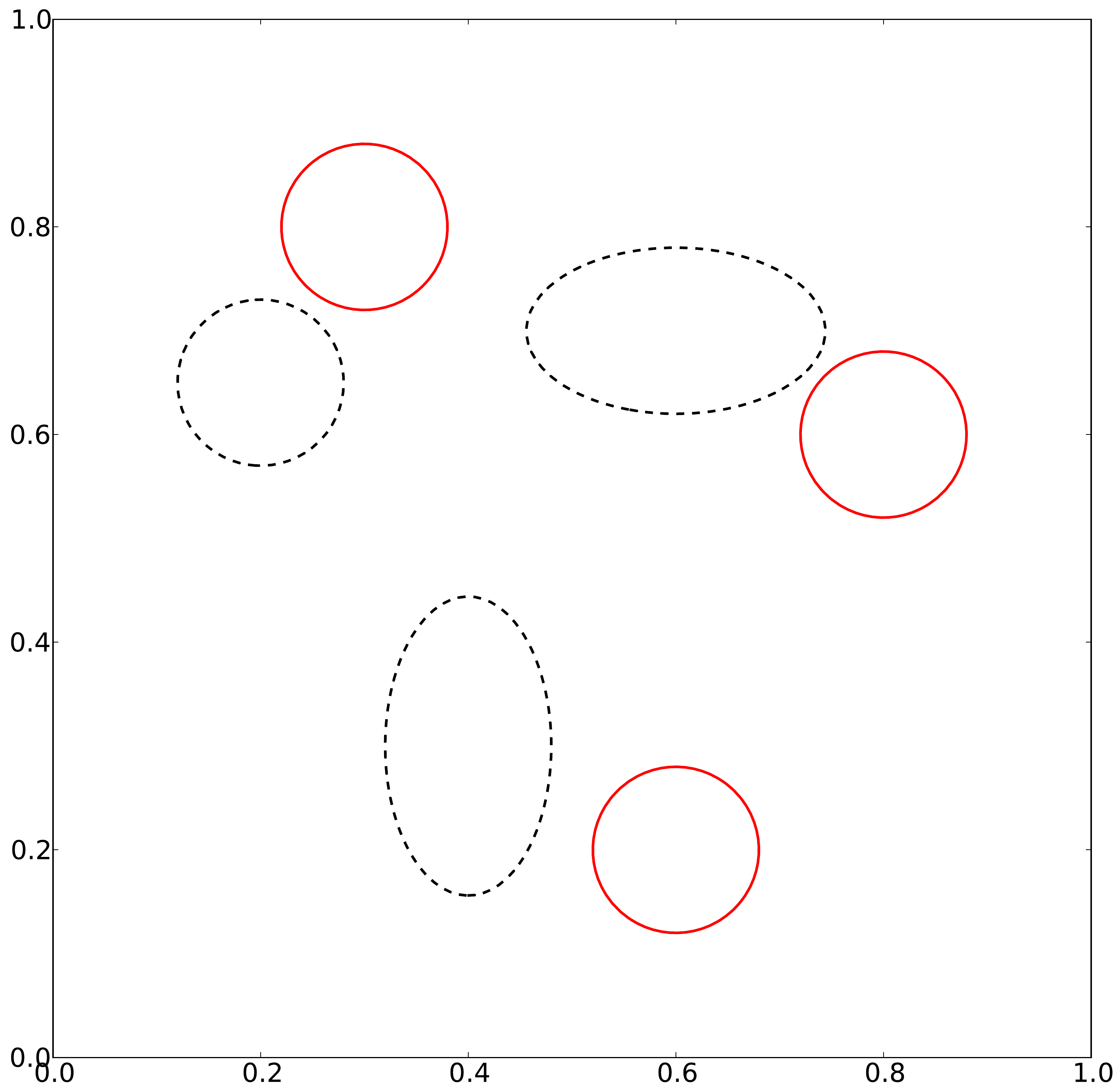}
			\includegraphics[width=0.4\textwidth]{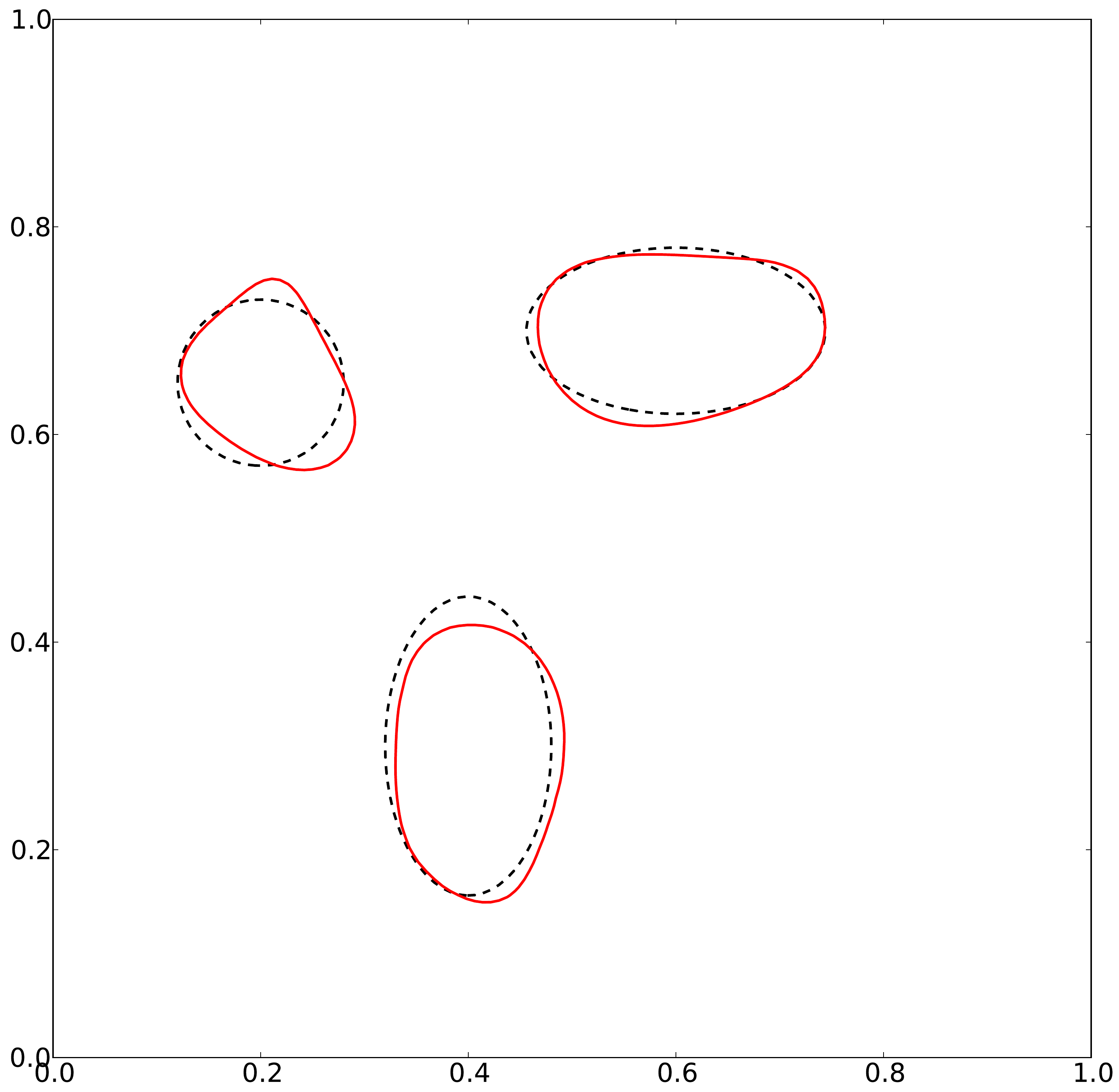}
		\end{center}
		\caption{Initialization (continuous contours - left) and reconstruction (continuous contours - right) of two ellipses and a ball (dashed contours) with $1.55\%$ noise (371 iterations) and using seven measurements.}\label{rec_three_inclusions}
	\end{figure}
	
	\begin{figure}[ht]
		\begin{center}
			\includegraphics[width=0.5\textwidth]{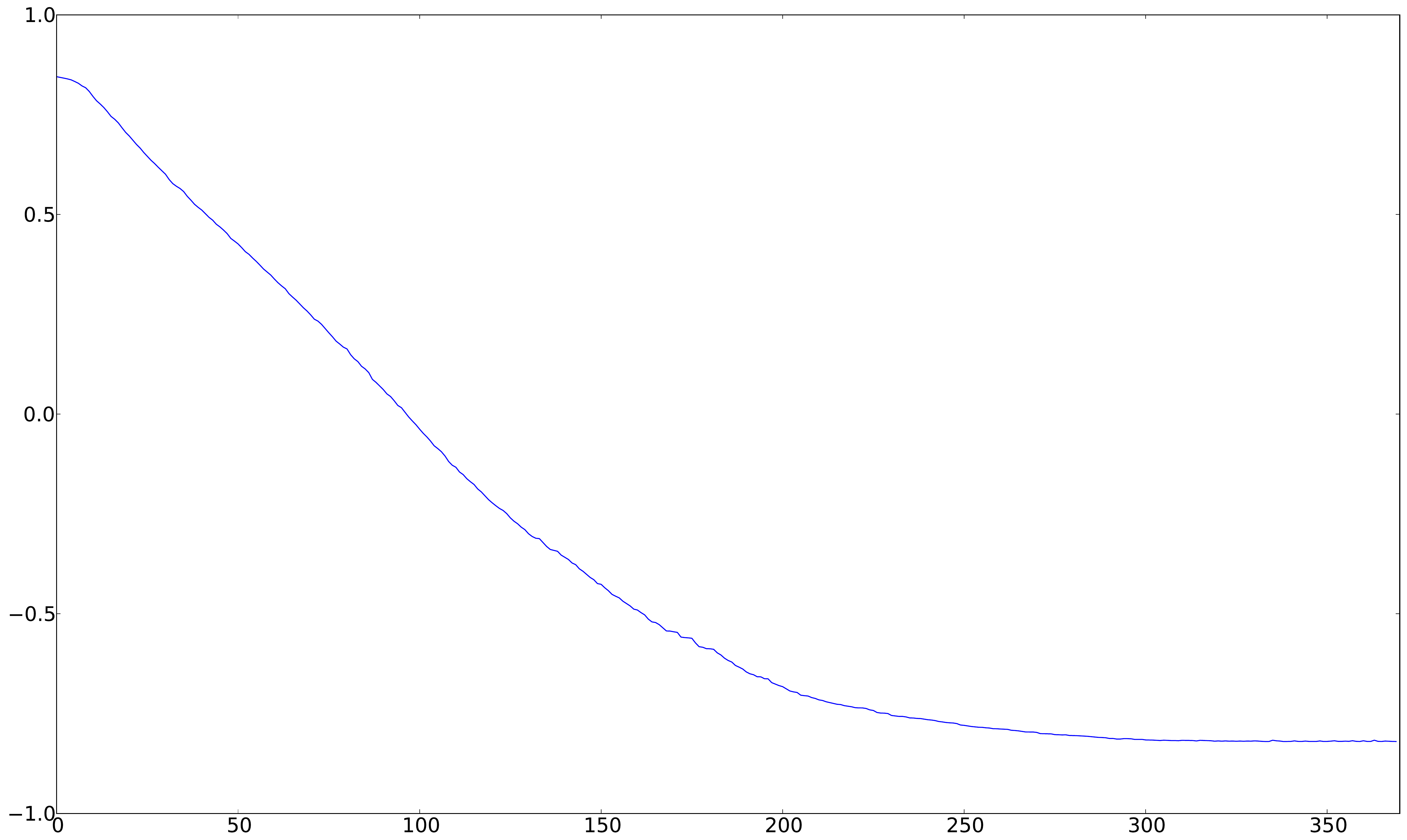}
		\end{center}
		\caption{History of cost functional corresponding to Figure \ref{rec_three_inclusions} in logarithmic scale.}\label{rec_three_inclusions2}
	\end{figure}

	{\bf Acknowledgments.} 
	The authors would like to thank the reviewers for their
	helpful comments. Antoine Laurain acknowledges support from the DFG research center MATHEON (MATHEON - Project C37, Shape/Topology optimization methods for inverse problems). Kevin Sturm acknowledges support from the  DFG research center MATHEON, Project C11.
	
	
	\bibliographystyle{abbrv}
	\bibliography{my_bib_paper4}

\end{document}